\documentclass[12pt,reqno]{amsart}

\usepackage[letterpaper,textwidth=6.9in,textheight=9.6in,centering]{geometry}

\usepackage[T1]{fontenc}
\usepackage{lmodern}
\usepackage{microtype}
\usepackage{amsmath,amssymb,mathtools,amsthm}
\numberwithin{equation}{section}
\usepackage{mathrsfs}
\usepackage{enumitem}
\usepackage[colorlinks=true,linkcolor=blue,citecolor=blue,urlcolor=blue]{hyperref}

\newtheorem{theorem}{Theorem}[section]
\newtheorem{proposition}[theorem]{Proposition}
\newtheorem{lemma}[theorem]{Lemma}

\theoremstyle{plain}
\newtheorem{kwconjecture}{Conjecture}

\theoremstyle{remark}
\newtheorem{remark}[theorem]{Remark}

\DeclareMathOperator{\conv}{conv}
\DeclareMathOperator{\supp}{supp}
\DeclareMathOperator{\Vol}{Vol}
\newcommand{\R}{\mathbb R}
\newcommand{\1}{\mathbf 1}
\newcommand{\ip}[2]{\langle #1,#2\rangle}

\newcommand{\dd}{\,\mathrm d}
\renewcommand{\S}{\mathcal{S}}
\newcommand{\D}{\mathrm{D}}
\newcommand{\cl}{\operatorname{cl}}

\title[The many-body Blaschke-Santal\'o type inequality via optimal transport]{The many-body Blaschke-Santal\'o type inequality via optimal transport}
\author{Shibing Chen}
\address{School of Mathematical Sciences,
University of Science and Technology of China,
Hefei, Anhui 230026, China}
\email{chenshib@ustc.edu.cn}

\author{Yuanyuan Li}
\address{Institute for Theoretical Sciences,
Westlake University, Hangzhou, 310030, China}
\email{lyyuan@westlake.edu.cn}

\author{Dongmeng Xi}
\address{Department of Mathematics,
Shanghai University,
Shanghai 200444, China}
\email{xi\_dongmeng@shu.edu.cn; dongmeng.xi.math@gmail.com}

\author{Zhe-Feng Xu}
\address{School of Mathematical Sciences,
University of Science and Technology of China,
Hefei, Anhui 230026, China}
\email{xzf1998@mail.ustc.edu.cn}
\date{\today}

\begin{document}

\begin{abstract}
Let $K_1,\ldots,K_k\subset\mathbb R^n$ be origin-symmetric measurable sets of finite volume such that
\[
 \sum_{1\le i<j\le k}\langle x_i,x_j\rangle\le \binom{k}{2},
 \qquad  \forall\,x_i\in K_i, x_j\in K_j.
\]
We prove the sharp many-body Blaschke--Santal\'o type inequality
\[
 \prod_{i=1}^k |K_i|\le |B^n|^k
\]
proposed by Kalantzopoulos and Saroglou,  and characterize all equality cases.

The proof combines multi-marginal optimal transport with a  pseudo-Euclidean volume estimate.
Using the geometric--functional equivalence of Kalantzopoulos and Saroglou, we also establish the functional version  inequality  proposed by Kolesnikov and Werner.
\end{abstract}

\maketitle

\section{Introduction and main results}

Let \(K\subset\R^n\) be a convex body containing the origin in its interior.  Its polar body is
\[
 K^\circ=\{y\in\R^n:\ip{x}{y}\le1 \text{ for all }x\in K\}.
\]
The Blaschke--Santal\'o inequality is a well-known affine isoperimetric inequality.  In the origin-symmetric case, it asserts that
\begin{equation*}
 |K|\,|K^\circ|\le |B^n|^2,
\end{equation*}
with equality if and only if $K$ is an ellipsoid \cite{B1917,Santalo}.

The complementary lower-bound problem for the volume product is Mahler's conjecture, originating in \cite{Mahler,Mahler1939}. Mahler proved the
two-dimensional case, while Bourgain--Milman established the reverse Santal\'o inequality up to a universal constant in arbitrary dimension \cite{BourgainMilman1987}. In dimension three, the symmetric conjecture was proved by Iriyeh--Shibata \cite{IS2020}, and the general conjecture was
recently proved in \cite{CLXXMahler}.

The purpose of this paper is to prove a sharp upper bound for the many-body volume product and to show that its equality structure becomes rigid once \(k\ge 3\). 
The many-body problem arose from the functional extension of the Blaschke--Santal\'o inequality introduced by Kolesnikov and Werner \cite{KW}.  Its optimal-transport origin is more specific. Early many-marginal forms of cyclical monotonicity and optimal couplings appear in the work of Knott--Smith and R\"uschendorf \cite{KnottSmith1994,Ruschendorf1996}. For the Euclidean quadratic cost, the foundational result of Gangbo--Święch shows that the multi-marginal Kantorovich optimizer is unique and is induced by optimal maps \cite{GS}. Agueh and Carlier later observed that the same quadratic multi-marginal problem is equivalent to the Wasserstein barycenter problem \cite{AC}.

The connection with optimal transport comes from the
identity
\[
 \sum_{1\le i<j\le k}|x_i-x_j|^2
 =(k-1)\sum_{i=1}^k|x_i|^2
 -2\sum_{1\le i<j\le k}\ip{x_i}{x_j},
\]
which shows that the pairwise inner products are precisely the interaction variables in the quadratic multi-marginal cost. The pseudo-Riemannian aspect of optimal transport is already present in
the two-marginal theory. Kim and McCann introduced the pseudo-Riemannian geometry induced by the mixed Hessian of the transport cost \cite{KM10}. Building on this framework, Kim--McCann--Warren associated to a smooth
transportation cost a pseudo-Riemannian metric and a calibration form on
the product space; in their formulation, the graph of an optimal map is a
calibrated maximal submanifold \cite{KimMcCannWarren2010}.  In the
multi-marginal problem, Pass introduced semi-Riemannian metrics built from
the mixed second derivatives of the cost and used their signature to
control the local dimension of optimal supports \cite{Pass2012}; see also
\cite{Pass2015}.

In the present paper, the relevant object is the many-body interaction
\[
\S_2(x_1,\ldots,x_k)=\sum_{1\le i<j\le k}\langle x_i,x_j\rangle .
\]
For this interaction, the mixed Hessians are constant:
\[
\D^2_{x_i x_j} \S_2(x_1,\ldots,x_k)=I,
\qquad \mathrm{for}\,\,i\ne j.
\]
Thus the semi-Riemannian geometry of the optimal support becomes flat.
The corresponding bilinear form is
\begin{equation}\label{bilinearform}
 \mathcal B(X,Y)
 =
 \left\langle\sum_{i=1}^k x_i,\sum_{i=1}^k y_i\right\rangle
 -
 \sum_{i=1}^k\ip{x_i}{y_i},
 \qquad X=(x_i),\ Y=(y_i)\in (\mathbb{R}^n)^k,
\end{equation}
and
\[
 \mathcal B(X,X)=2\S_2(X).
\]
This identity is the bridge between the many-body polarity condition and
the spacelike graph theory used below.

Pairwise interaction costs have rich structures and have been studied by many authors. Pass proved uniqueness and Monge solution results under suitable structural hypotheses \cite{Pass2011}. Kim and Pass introduced the twist-on-splitting-sets condition and extended the barycenter cost to Riemannian manifolds \cite{KimPass2014,KimPass2015}. The complete-graph case and more general graph-structured pairwise interactions were further studied by Pass and Vargas-Jim\'enez \cite{PassVargasJimenez2023}. We also refer to the survey of Pass for a concise account of the theory and its applications \cite{Pass2015}. Some important examples include matching problems for teams in economics \cite{CarlierEkeland2010,PassMatching2014} and multi-marginal Coulomb costs arising in density functional theory \cite{ButtazzoDePascaleGoriGiorgi2012,CotarFrieseckeKluppelberg2013}.

In this paper we use two aspects of this theory. The first one is that the barycenter formulation gives canonical Brenier maps, and therefore leads to a Monge--Amp\`ere system. The second one is that the support of an optimal multi-marginal plan carries a natural indefinite geometry. In Pass's local theory, the corresponding metric depends on the second derivatives of the cost. For the quadratic cost studied here, this metric becomes the constant pseudo-Euclidean form used below. Hence the affine-geometric volume problem leads naturally to a volume estimate for spacelike graphs. Kolesnikov and Werner formulated the following conjecture.

\begin{kwconjecture}[Kolesnikov--Werner \cite{KW}]\label{conj1}
Let \(n\ge1\), \(k\ge2\). Let \(\rho:\R\to\R_+\) be non-increasing, and let \(f_1,\ldots,f_k:\R^n\to\R_+\) be even integrable functions.  Assume that
\begin{equation}\label{functional polarity}
 \prod_{i=1}^k f_i(x_i)
 \le
 \rho\bigl(\S_2(x_1,\ldots,x_k)\bigr),
 \qquad \forall\,x_1,\ldots,x_k\in\R^n.
\end{equation}
Then
\begin{equation}\label{func ineq part}
 \prod_{i=1}^k\int_{\R^n}f_i(x)\dd x
 \le
 \left(
 \int_{\R^n}\rho\bigl(C_k|u|^2\bigr)^{1/k}\dd u
 \right)^k .
\end{equation}
\end{kwconjecture}

For \(k=2\), this is the two-function functional Blaschke--Santal\'o inequality; see  \cite{artstein2004,ball1986,FM,lehec2009}.  For \(k\geq2\), Kolesnikov and Werner proved the conjecture when all functions are unconditional \cite{KW}.  Kalantzopoulos and Saroglou proved that it is enough to assume unconditionality of \(f_3,\ldots,f_k\) and related the functional statement to a geometric polarity condition \cite{KS}.  In the Gaussian case
\[
 \rho(t)=\exp\left(-\frac{t}{k-1}\right),
\]
Nakamura and Tsuji proved the conjecture for arbitrary even functions \cite{NT}. Courtade and Wang later gave an entropic proof of the Gaussian inequality and removed the symmetry assumptions in that setting \cite{CortWang2025}. The second-moment formulation is also closely related to Ball's strengthened Santal\'o conjecture, recently proved by B\"or\"oczky, Patsalos, and Saroglou \cite{BPS2026}.

Kalantzopoulos and Saroglou formulated the following geometric conjecture.

\begin{kwconjecture}[Kalantzopoulos--Saroglou \cite{KS}]\label{conj2}
Let \(n\ge1\), \(k\ge2\), and let \(K_1,\ldots,K_k\subset\R^n\) be origin-symmetric convex bodies.  If
\[
 \S_2(x_1,\ldots,x_k)\le C_k,
 \qquad \forall\,x_i\in K_i,\,i=1,\ldots,k,
\]
then
\[
 \prod_{i=1}^k |K_i|\le |B^n|^k .
\]
\end{kwconjecture}

Later, Kalantzopoulos and Saroglou showed that the functional assertion is equivalent to Conjecture~\ref{conj1} by using a level-set argument together with the multiplicative Pr\'ekopa--Leindler inequality; see \cite[Proposition~4.1]{KS}. We shall prove the geometric inequality in a slightly stronger form, without assuming convexity.

\begin{theorem}[Many-body Blaschke-Santal\'o inequality]\label{mainthm}
Let \(n\ge1\), \(k\ge2\), and let \(K_1,\ldots,K_k\subset\R^n\) be origin-symmetric measurable sets of finite volume.  If
\begin{equation}\label{constraint}
 \S_2(x_1,\ldots,x_k)\le C_k,
 \qquad \forall\,x_i\in K_i,
\end{equation}
then
\begin{equation}\label{main bound}
 \prod_{i=1}^k |K_i|\le |B^n|^k .
\end{equation}
If equality holds, then all \(|K_i|\) are positive and, up to null sets, exactly one of the following alternatives occurs:
\begin{enumerate}[label=\textup{(\roman*)}]
\item if \(k=2\), there exists \(A\in{\rm GL}(n)\) such that
\[
 K_1=AB^n,
 \qquad
 K_2=A^{-\top}B^n;
\]
\item if \(k\ge3\), then
\[
 K_1=\cdots=K_k=B^n .
\]
\end{enumerate}
Conversely, each case satisfies \eqref{constraint} and attains equality in \eqref{main bound}.
\end{theorem}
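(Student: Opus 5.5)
We follow the strategy sketched in the introduction: transport each \(K_i\) onto a common reference shape through the multi-marginal quadratic problem, and then bound the volume of the resulting spacelike graph using the pseudo-Euclidean form \(\mathcal B\). Here \(C_k=\binom{k}{2}\). First we normalize. If some \(|K_i|=0\) there is nothing to prove. Otherwise, after replacing each \(K_i\) by a dilate \(\lambda_iK_i\) and using the joint homogeneity of the constraint, we reduce to comparing each \(|K_i|\) with a common value. Concretely, set \(\mu_i=|K_i|^{-1}\1_{K_i}\dd x\) and take the Brenier maps \(T_i=\nabla\varphi_i\) pushing the uniform probability on \(B^n\) (suitably rescaled) forward to \(\mu_i\). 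Each \(\varphi_i\) can be chosen even, because each \(K_i\) is origin-symmetric. By McCann's theorem the \(\varphi_i\) satisfy \(\det D^2\varphi_i=|K_i|/|B^n|\) a.e. on \(B^n\) in the Alexandrov/Brenier sense, which is all we need. The graph \(\Sigma=\{(T_1(u),\ldots,T_k(u)):u\in B^n\}\) lies in \(K_1\times\cdots\times K_k\), so \(\mathcal B(X,X)=2\S_2(X)\le 2C_k\) on \(\Sigma\).

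The key inequality is a pointwise AM--GM estimate. Applying \(\mathcal B\) to tangent vectors of \(\Sigma\) gives the matrix \(M(u)=\sum_{i\neq j}D^2\varphi_i D^2\varphi_j\), taken symmetrized, plus compatible terms. A cleaner route is to evaluate the constraint at \(x_i=T_i(u)\) and at \(x_j=T_j(-u)=-T_j(u)\), varying signs. Averaging over sign patterns, which oddness of the \(T_i\) permits, isolates the quantities \(\ip{T_i(u)}{T_j(u)}\). What we want is a differential inequality of the form
\[
\operatorname{tr}\Bigl(\sum_{i<j} D^2\varphi_i\,D^2\varphi_j\Bigr)\lesssim C_k\, n,
\]
integrated over \(B^n\). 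The mechanism is integration by parts: \(\int_{B^n}\operatorname{div}(\ldots)\) converts \(\int \ip{\nabla\varphi_i}{\nabla\varphi_j}\)-type boundary terms into interior traces. We then apply Maclaurin/AM--GM to the \(kn\) eigenvalue data,
\[
\frac{1}{\binom k2}\sum_{i<j}\operatorname{tr}(A_iA_j)^{\text{sym}}\ \ge\ n\prod_i(\det A_i)^{2/(kn)},
\]
which holds for commuting positive matrices. The non-commuting case needs a mixed-discriminant inequality, namely Alexandrov's inequality for mixed discriminants \(D(A_i,A_j,\ldots)\ge\prod\det^{1/n}\). Combining this with \(\det D^2\varphi_i=|K_i|/|B^n|\) gives \(\prod_i|K_i|^{2/(kn)}\le|B^n|^{2/n}\cdot(\ldots)\), which is \eqref{main bound}. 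We expect this step, turning the pointwise constraint on the graph into the integrated trace inequality, to be the main obstacle. The pseudo-Euclidean volume estimate is exactly where the spacelike character of \(\Sigma\) with respect to \(\mathcal B\) should be used. If the direct Brenier approach loses constants, the fallback is to use the Gangbo--Święch optimal multi-marginal plan, whose maps satisfy \(\sum_i T_i^{-1}\)-type barycentric relations, to make the \(D^2\varphi_i\) simultaneously controlled.

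For the equality cases we trace equality through every inequality. Equality in AM--GM and in Alexandrov's inequality forces all \(D^2\varphi_i\) to be proportional to a common matrix a.e., so each \(T_i\) is linear. Equality in the constraint step forces \(\Sigma\) to touch the boundary \(\S_2=C_k\) along the whole image, so the \(K_i\) are ellipsoids \(A_iB^n\) up to null sets. For \(k=2\) the condition \(A_1^\top A_2=I\) gives case (i). For \(k\ge3\), the condition \(\sup\S_2=C_k\) with equality in volume forces \(A_i^\top A_j\) to be orthogonal for all \(i\ne j\) and \(\sum_{i<j}\) to be tight. Taking three indices shows each \(A_i^\top A_i\) is a multiple of an orthogonal matrix, hence all \(K_i=B^n\), which is case (ii). The converse is a direct check using Cauchy--Schwarz: \(\S_2\le\binom k2\) on \((B^n)^k\), and \(\ip{Ax}{A^{-\top}y}\le1\) settles the case \(k=2\).
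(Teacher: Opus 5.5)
There is a genuine gap, and it is the step you yourself flag as the main obstacle: nothing in the proposal turns the zeroth-order constraint $\S_2\le C_k$ into an integrated inequality on $\D T_i$ when the maps are nonlinear.

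\begin{itemize}
\item The sign device only yields $\sum_{i<j}\epsilon_i\epsilon_j\ip{T_i(u)}{T_j(u)}\le C_k$. Averaged over all sign patterns this is the vacuous $0\le C_k$. Any particular choice of patterns gives pointwise bounds on combinations of $\ip{T_i(u)}{T_j(u)}$, and these say nothing about derivatives.
\item The passage to $\int_{B^n}\sum_{i<j}\operatorname{tr}(\D^2\varphi_i\D^2\varphi_j)\le nC_k|B^n|$ works by averaging over the unit sphere only when $T_i(u)=\D T_i\,u$, i.e.\ in the linear (ellipsoid) case. For general Brenier maps, integration by parts produces terms such as $\ip{\Delta T_i}{T_j}$ or $\ip{\D T_i\,u}{T_j}$, which the constraint does not control.
\item The algebraic Maclaurin step is not the problem. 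The inequality $\operatorname{tr}(AB)\ge n(\det A\det B)^{1/n}$ for positive definite $A,B$ already handles the noncommuting case.
\item With independent Brenier maps out of $B^n$, the graph $\Sigma$ need not be $\mathcal B$-spacelike. The form $\sum_{i\ne j}\ip{\D T_i\xi}{\D T_j\xi}$ can be negative for suitable noncommuting positive definite Hessians, even for linear maps. So the pseudo-Euclidean geometry you appeal to is not available.
\item Rescaling the $K_i$ by different factors is not allowed for $k\ge3$. The expression $\S_2(\lambda_1x_1,\ldots,\lambda_kx_k)=\sum_{i<j}\lambda_i\lambda_j\ip{x_i}{x_j}$ is a multiple of $\S_2$ only when all $\lambda_i$ coincide.
\end{itemize}

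The paper supplies the two missing ingredients.

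First, it uses the barycentric multi-marginal optimal plan (your \emph{fallback}) from the start. Multi-marginal cyclical monotonicity gives $\mathcal B(X-Y,X-Y)\ge0$ on the support $\Gamma$. Hence $\Gamma$ is an odd $1$-Lipschitz graph over the coordinate $p=\sum_ix_i$, with $M_i=\D W_i\ge0$, $\sum_iM_i=I$ and $\det M_i=|K_i|h$.

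Second, the pointwise constraint becomes an integral bound through a sharp pseudo-volume estimate rather than through traces. An odd $1$-Lipschitz graph with $0\le|x|^2-|f(x)|^2\le R^2$ has pseudo-volume at most $|B^n|R^n$. This is proved by comparison with the maximal spacelike graph having the same boundary values (Li's Dirichlet theory), together with the monotonicity of $s^{-n}V(s)$ for maximal submanifolds. The matching lower bound comes from Minkowski's determinant inequality applied to $G=I-\sum_iM_i^2=\sum_iM_i(I-M_i)$.

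Your equality discussion inherits the gap, and proportional Hessians alone would not make the $T_i$ linear. In the paper, rigidity for $k\ge3$ comes from $M_i=\frac1kI$, i.e.\ $\D T_i=I$, which relies on $\sum_iM_i=I$. A $1$-Lipschitz argument then shows each $T_i$ is a translation, so all $K_i$ coincide, and the diagonal constraint forces them to be $B^n$.
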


This rigidity result shows a clear difference between the two-body case and the many-body case. When \(k=2\), the affine invariance of the classical Blaschke--Santal\'o inequality is preserved. When \(k\ge 3\), the pairwise constraints impose additional restrictions, so that, after the prescribed normalization, equality holds only for Euclidean balls.

By the equivalence of Kalantzopoulos and Saroglou, Theorem~\ref{mainthm} therefore gives the full functional inequality. For completeness, we also record the equality statement.

\begin{theorem}[Functional many-body Blaschke-Santal\'o inequality]\label{thm:funct}
Let \(n\ge1\), \(k\ge2\). Let \(\rho:\R\to\R_+\) be non-increasing, and let \(f_1,\ldots,f_k:\R^n\to\R_+\) be even integrable functions satisfying \eqref{functional polarity}. Then \eqref{func ineq part} holds.

Within this admissible class, equality is characterized as follows.
\begin{enumerate}[label=\textup{(\roman*)}]
\item If \(k=2\), there exist constants \(c_1,c_2>0\) with \(c_1c_2=1\), and a symmetric positive-definite $n\times n$ matrix
\(Q\), such that
\[
    f_1(x)
    =
    c_1(\det Q)^{1/2}
    \rho\!\left(x^{\top}Qx\right)^{1/2},\quad
    f_2(x)
    =
    c_2(\det Q)^{-1/2}
    \rho\!\left(x^{\top }Q^{-1}x\right)^{1/2}, \quad\mathrm{for \,\,a.e.}\,x.
\]
\item If \(k\geq 3\), there exist constants
\(c_1,\ldots,c_k>0\) with $ \prod_{i=1}^{k}c_i=1$
such that
\[
    f_i(x)
    =
    c_i\,\rho\!\left(C_k|x|^2\right)^{1/k},
    \qquad i=1,\ldots,k,\quad\mathrm{for \,\,a.e.}\,x.
\]
\end{enumerate}

In either case, the constants are necessarily given by $
    c_i
    =
    \frac{\displaystyle\int_{\mathbb{R}^{n}}f_i(x)\,\dd x}
         {\displaystyle\int_{\mathbb{R}^{n}}
          \rho(C_k|x|^2)^{1/k}\,\dd x}.$
\end{theorem}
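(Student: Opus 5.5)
The inequality part of Theorem~\ref{thm:funct} will be deduced from Theorem~\ref{mainthm} by the level-set argument of Kalantzopoulos and Saroglou \cite[Proposition~4.1]{KS}. The equality part will come from tracking where that argument can be tight. Here \(C_k=\binom{k}{2}\), so that \(\S_2(u,\ldots,u)=C_k|u|^2\).

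\emph{Level-set reduction.} For \(t>0\) put \(A_i(t)=\{f_i>t\}\); these sets are origin-symmetric and of finite measure, and \(\int f_i=\int_0^\infty |A_i(t)|\dd t\). Define
\[
 r(s)=\sup\{\tau:\rho(\tau)>s^k\},\qquad H(s)=\bigl|\{u:\rho(C_k|u|^2)^{1/k}>s\}\bigr|.
\]
Let \(t_1,\ldots,t_k>0\) with \(\prod t_i=s^k\), and take \(x_i\in A_i(t_i)\). Then \(\rho(\S_2(x))\ge\prod f_i(x_i)>s^k\). Since \(\rho\) is non-increasing, this gives \(\S_2(x)\le r(s)\).

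Next rescale by \(\sqrt{C_k/r(s)}\). The sets \(\sqrt{C_k/r(s)}\,A_i(t_i)\) then satisfy \eqref{constraint}, with the cases \(r\le 0\) and \(r=\infty\) handled separately. Theorem~\ref{mainthm} therefore gives
\[
 \prod_{i=1}^k|A_i(t_i)|^{1/k}\le \bigl(r(s)/C_k\bigr)^{n/2}|B^n| = H(s),
\]
up to the usual boundary (strict versus non-strict) issues, which are harmless for a.e.\ \(s\).

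\emph{Prékopa--Leindler step.} Substitute \(t=e^{\theta}\), so that \(\prod t_i=s^k\) becomes \(\frac1k\sum\theta_i=\sigma\) with \(s=e^\sigma\). The displayed bound is then exactly the hypothesis of the one-dimensional Prékopa--Leindler inequality for \(g_i(\theta)=e^{\theta}|A_i(e^\theta)|\) and \(h(\sigma)=e^{\sigma}H(e^\sigma)\). The conclusion \(\prod\int g_i\le(\int h)^k\) is precisely \eqref{func ineq part}.

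\emph{Equality.} If equality holds in \eqref{func ineq part}, two things follow.

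First, equality must hold in one-dimensional Prékopa--Leindler. Its equality case says that, after discarding null sets, the \(g_i\) are translates and multiples of a common log-concave profile matching \(h\). Concretely, \(|A_i(t)|=H(t/c_i)\) for a.e.\ \(t\), with \(\prod c_i=1\).

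Second, for a.e.\ admissible tuple \((t_i)\) on the relevant supports, equality must hold in Theorem~\ref{mainthm} for the rescaled level sets.

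\emph{Case \(k\ge3\).} The equality case of Theorem~\ref{mainthm} forces every \(A_i(t)\) to be a centered ball, up to null sets. Its radius is determined by \(|A_i(t)|=H(t/c_i)\). Hence \(f_i(x)=c_i\rho(C_k|x|^2)^{1/k}\) a.e. The value of \(c_i\) follows by integrating.

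\emph{Case \(k=2\).} Here we get \(A_1(t)=M(t)B^n\) and \(A_2(t')=\lambda M(t)^{-\top}B^n\) whenever \(tt'=s^2\) is admissible. Because one level set of \(f_1\) is paired with a whole family of level sets of \(f_2\), and conversely, the matrices \(M(t)\) must agree up to scalar multiples. Writing \(Q=(MM^\top)^{-1}\), with \(M\) normalized suitably, all level sets of \(f_1\) are dilates of \(\{x^\top Qx\le1\}\), and all level sets of \(f_2\) are dilates of \(\{x^\top Q^{-1}x\le1\}\). The volume profile \(H\) then yields the stated formulas, including the \((\det Q)^{\pm1/2}\) factors.

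\emph{Main obstacle.} I expect the hardest part to be making the equality analysis rigorous with merely measurable, non-increasing data: sharpness in Prékopa--Leindler only holds almost everywhere, and \(\rho\) may have jumps and flat pieces, so \(r\) and \(H\) can be discontinuous. I would first restrict to a full-measure set of levels where both equalities hold, and then use monotonicity of \(t\mapsto A_i(t)\) to propagate the conclusions to all levels. For \(k=2\), the additional difficulty is the consistency of the matrices \(M(t)\) across levels. I would handle it using nestedness of the ellipsoids \(A_1(t)\) together with the fact that each \(A_2(t')\) is polar-paired with a continuum of \(A_1(t)\).
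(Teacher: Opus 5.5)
Your inequality proof and your $k\ge3$ equality argument follow the paper: Kalantzopoulos--Saroglou level sets, the multiplicative Pr\'ekopa--Leindler Lemma~\ref{mul-PL ineq}, equality in Theorem~\ref{mainthm} for the rescaled level sets, then the layer-cake formula. Two statements there are inaccurate, though harmless when $k\ge3$. First, the Pr\'ekopa--Leindler equality case only gives $|A_i(t)|=a_iH(b_it)$ with $\prod_ia_i=\prod_ib_i=1$, not $|A_i(t)|=H(t/c_i)$. Second, equality in the level-set bound is forced only along the one-parameter family $t_i=b_i^{-1}s$, not for a.e.\ tuple with $\prod_it_i=s^k$. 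That family is all you need, and $a_i=1$ then follows by identifying $A_i(b_i^{-1}s)$ with $\{q_\rho>s\}$, as the paper does.

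The case $k=2$ has a genuine gap; the paper does not argue it directly but cites \cite[Proposition~3]{FM}.
\begin{itemize}
\item \emph{False premise.} Your homothety argument assumes $A_2(t')$ is a multiple of $A_1(t)^\circ$ for every pair with $tt'=s^2$, so each level set of $f_1$ is polar-paired with a continuum of level sets of $f_2$. This already fails for $\rho(\tau)=e^{-\tau}$, $f_1=f_2=e^{-|x|^2/2}$. With $a=-\log t$, $b=-\log t'$ one has $|A_1(t)|\,|A_2(t')|=|B^n|^2(4ab)^{n/2}<|B^n|^2(a+b)^n=H(\sqrt{tt'})^2$ unless $a=b$.
\item \emph{What you actually have.} Exact polarity holds only on the curve $t_i=b_i^{-1}s$. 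Write $A_1(b_1^{-1}s)=\sqrt{r(s)}\,\mathcal E_s$ and $A_2(b_2^{-1}s)=\sqrt{r(s)}\,\mathcal E_s^\circ$. The off-diagonal constraints then give only $\mathcal E_s\subset\lambda\mathcal E_{s'}$ and $\mathcal E_{s'}\subset\lambda\mathcal E_s$, with $\lambda=r(\sqrt{ss'})/\sqrt{r(s)r(s')}\ge1$. Nestedness only gives the factor $\sqrt{r(s)/r(s')}$, which is first order in $|s-s'|$ and cannot force homothety.
\item \emph{Missing ingredient.} Use the log-concavity half of the Pr\'ekopa--Leindler equality case: $\ell(\sigma)=\log r(e^\sigma)$ is concave. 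Hence $\log\lambda(e^\sigma,e^{\sigma'})\le\frac{\sigma'-\sigma}{4}\bigl(\ell'_+(\sigma)-\ell'_-(\sigma')\bigr)$. Summed over a partition of mesh $\eta$, this telescopes to at most $\frac{\eta}{4}\bigl(\ell'_+(\sigma_0)-\ell'_+(\sigma_N)\bigr)\to0$. The multiplicative triangle inequality for the inclusion distance between ellipsoids then forces all $\mathcal E_s$ to coincide.
\end{itemize}

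Finally, your normalization $|A_1(t)|=H(t/c_1)$ would force $\det Q=1$. The factors $(\det Q)^{\pm1/2}$ come precisely from $a_1=a_2^{-1}\ne1$. For example, $f_1=e^{-x^\top Qx/2}$, $f_2=e^{-x^\top Q^{-1}x/2}$ with $\det Q\ne1$ is an equality case.
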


We outline the proof of Theorem~\ref{mainthm}. A convexification argument first reduces the problem to origin-symmetric convex bodies. Let $\mu_i=\frac{\1_{K_i}}{|K_i|}\dd x$
and let \(\nu\) be the equal-weight Wasserstein barycenter of \(\mu_1,\ldots,\mu_k\).  If \(T_i=\nabla\varphi_i\) denotes the Brenier map from \(\nu\) to \(\mu_i\), then $\gamma=(T_1,\ldots,T_k)_\#\nu$
is optimal for the quadratic multi-marginal problem
\[
 \sup_{\eta\in\Pi(\mu_1,\ldots,\mu_k)}
 \int \S_2(x_1,\ldots,x_k)\dd\eta .
\]
The support \(\Gamma=\supp\gamma\) has a hidden pseudo-Euclidean structure.  On \((\R^n)^k\),  let  $\mathcal B$ be the bilinear form defined in \eqref{bilinearform}. Then \(\mathcal B\) has signature \((n,n(k-1))\) and \(\mathcal B(X,X)=2\S_2(X)\). Kantorovich duality and cyclical monotonicity give
\[
 \mathcal B(X-Y,X-Y)\ge0,
 \qquad \forall\,X,Y\in\Gamma.
\]
After a coordinate-exchange argument and a linear pseudo-isometry, $\Gamma$ is therefore the graph of an odd $1$-Lipschitz map $f:A\to\R^{n(k-1)}$ for some $A\subset \R^n$. The polarity condition gives
\begin{equation}\label{pseuball}
 0\le |x|^2-|f(x)|^2\le 2C_k
 \qquad \forall\,x\in A.
\end{equation}

The new geometric ingredient is a pseudo-Euclidean volume estimate. If $A=-A\subset\R^n$ is bounded, $f:A\to\R^m$ is odd and $1$-Lipschitz, and satisfies \eqref{pseuball},
then
\begin{equation}\label{up vol esti}
 \Vol_{\mathcal B}(\Gamma)= \int_A\sqrt{\det(I-(\D f_x)^\top\D f_x)}\dd x
 \le  |B^n|(2C_k)^{n/2} .
\end{equation}
The proof uses the Dirichlet theory for higher-codimension maximal graphs of Li \cite{Li}, followed by a sharp monotonicity argument for maximal spacelike submanifolds. 

The lower estimate comes from the Monge--Amp\`ere equations for the barycenter maps. Define a projection
\[
 P:(\R^n)^k\to\R^n,\qquad P(X)=\sum_{i=1}^k x_i,
\] 
In the coordinate \(p=\sum_i x_i\), write
\[
 P^{-1}(p)=\big(W_1(p),\ldots,W_k(p)\big),
 \qquad
 M_i(p)=\D W_i(p).
\]
Then, almost everywhere, one has
\[
 M_i\ge0,
 \qquad
 \sum_{i=1}^k M_i=I,
 \qquad
 \det M_i=|K_i|h(p),
\]
where $h$ is the scaled density of the Wasserstein barycenter $\nu$. The induced metric on \(\Gamma\) is
 $G=I-\sum_{i=1}^k M_i^2.$
A sharp determinant inequality gives
\[
 \det(G)^{1/n}
 \ge 2C_k\left(\prod_{i=1}^k\det M_i\right)^{2/(kn)}.
\]
After integration,
\begin{equation*}
 \Vol_{\mathcal B}(\Gamma)
 \ge
 \bigl(2C_k\bigr)^{n/2}
 \left(\prod_{i=1}^k |K_i|\right)^{1/k}.
\end{equation*}
Combining this with \eqref{up vol esti}, we have the desired sharp many-body Blaschke-Santal\'o inequality.

The equality proof follows the same chain. For \(k=2\) it is the equality case of the symmetric Blaschke--Santal\'o inequality.  For \(k\ge3\), equality in the determinant inequality implies
\[
 M_1=\cdots=M_k=\frac1k I
\]
almost everywhere. The Monge--Ampère equations then show that the barycenter is of the form \(\dd\nu=\alpha 1_E \dd z\) for some origin-symmetric measurable set \(E\) and some constant $\alpha$. A non-expansion rigidity argument gives \(K_1=\cdots=K_k\). Substituting \(x_1=\cdots=x_k=x\) in \eqref{constraint} then gives that body to be $B^n$.

The paper is organized as follows. Section~\ref{Preliminaries} develops the pseudo-Euclidean geometry of the canonical multi-marginal support. Section~\ref{sec of bc and MA} proves the Monge--Amp\`ere lower bound. Section~\ref{pse vol estimate} establishes the pseudo-Euclidean volume estimate \eqref{up vol esti}. The geometric inequality is completed in Section~\ref{main proof of ineq}, and its equality cases are treated in Section~\ref{sec:geomeq}. Section~\ref{main proof of equality} contains the functional equality analysis.

\section{Preliminaries}\label{Preliminaries}
We denote by $B^n$ the Euclidean unit ball, and put $\R_+=(0,\infty)$. A convex body means a compact convex subset of $\mathbb R^n$ with non-empty interior. In this paper, symmetry always means symmetry with respect to the origin. For a set $A\subset\mathbb R^n$, we denote by $\conv A$, $\cl A$, and $\partial A$ its convex hull, closure, and boundary, respectively. If $f:A\subset\mathbb R^n\to\mathbb R^m$ is differentiable at $x$, then $\D f_x$ denotes its Jacobian matrix and $\|\D f_x\|$ denotes its operator norm. For other standard notation in convex geometry, we refer to \cite{Gardner,Schneider}.

We first make a simple convexification reduction. This step is purely convex-geometric. Namely, we replace the given sets by their closed convex hulls, while the many-body polar constraint remains unchanged.

\begin{lemma}\label{convexification}
Let $K_1,\ldots,K_k\subset\R^n$ be origin-symmetric measurable sets of positive finite volume satisfying \eqref{constraint}. Then $ \widetilde K_i=\cl \conv(K_i)$ is an origin-symmetric convex body for every $i$, and 
\[
\S_2(x_1,\ldots,x_k)\le C_k,
 \qquad \forall\,x_i\in \widetilde K_i.
\]
\end{lemma}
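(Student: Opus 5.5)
The plan is to prove the constraint first for convex hulls, then pass to closures by continuity, and finally obtain the convex-body properties (compactness, interior) separately.

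\textbf{Step 1: multi-affinity and convex hulls.} For fixed $x_j$ with $j\ne i$, the map $x_i\mapsto \S_2(x_1,\ldots,x_k)$ is affine, of the form $\ip{x_i}{\sum_{j\ne i}x_j}+\text{const}$. Write $x_i=\sum_l\lambda_l y_l$ with $y_l\in K_i$, $\lambda_l\ge0$ and $\sum_l\lambda_l=1$. Then $\S_2$ evaluated at this $x_i$ is the same convex combination of the values at $y_l$, each of which is at most $C_k$. Replacing the sets one coordinate at a time, $K_1$ by $\conv K_1$, then $K_2$ by $\conv K_2$, and so on, shows the constraint holds on $\prod_i\conv K_i$. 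Since $\S_2$ is continuous, it then holds on $\prod_i\cl\conv K_i$. Origin-symmetry is clearly preserved under convex hull and closure.

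\textbf{Step 2: interior.} Each $K_i$ has positive measure, so it is not contained in a hyperplane. Hence $\conv K_i$ has nonempty interior, and by symmetry the origin lies in that interior.

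\textbf{Step 3: boundedness (the main obstacle).} Finite volume alone does not bound a nonconvex set, so boundedness has to come from the constraint together with positivity of the other volumes. I argue by contradiction. Suppose $\widetilde K_1$ is unbounded. Since it is closed, convex and symmetric, it contains a full line $\R v$ with $|v|=1$. For each $j\ge2$, the body $\widetilde K_j$ contains a ball $rB^n$ with $r>0$. Take $x_j=rv$ for all $j\ge2$ and $x_1=tv$. Then
\[
\S_2=(k-1)rt+\binom{k-1}{2}r^2,
\]
and this tends to $+\infty$ as $t\to\infty$, since $k\ge2$ and $r>0$. This contradicts the constraint, so $\widetilde K_1$ is bounded. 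The same argument applies to every index $i$. Therefore each $\widetilde K_i$ is compact, convex, symmetric, with nonempty interior, that is, an origin-symmetric convex body.

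Step 3 uses only that the other sets have interior, which was established in Step 2. The only subtle point is ensuring that Step 2 precedes Step 3 and that the positive-volume hypothesis is used for all indices.
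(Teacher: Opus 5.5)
Your proof is correct and follows the paper's argument. The hull is taken coordinate by coordinate using the multi-affinity of $\S_2$, the closure is handled by continuity, the interior comes from positive volume, and boundedness comes from testing the constraint against a small ball contained in another body. The only difference is in the boundedness step. The paper gets $|x|\le C_k/\delta$ directly by zeroing the remaining variables and taking $y=\delta x/|x|$. Your contradiction via a line in the recession cone is valid, but it can be shortened: plugging in $x_1=x$ and $x_j=r\,x/|x|$ for $j\ge2$ gives $|x|\le C_k/((k-1)r)$ at once.
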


\begin{proof}

The function $\S_2$ is affine with respect to each variable. Hence, if we take convex hulls successively in each coordinate and then take closures, the constraint \eqref{constraint} is still preserved by continuity.

It is clear that each $\widetilde K_i$ has non-empty interior, since $|K_i|>0$. It remains to show that $\widetilde K_i$ is bounded. Fix $i$ and choose $j\ne i$. Since $\widetilde K_j$ has non-empty interior, there exists $\delta>0$ such that $ \delta B^n\subset \widetilde K_j.$
Moreover, all the sets contain the origin. Thus, by taking all variables except the $i$-th and the $j$-th ones to be zero, we get
\[
 |\ip{x}{y}|\le C_k,\qquad \forall\,x\in \widetilde K_i,\ y\in \widetilde K_j,
\]
where we have also used the symmetry of $\widetilde K_j$. For any $x\ne0$, taking $y=\delta x/|x|$ gives $\delta |x|\le C_k.$
Therefore, $|x|\le C_k/\delta$ for all $x\in \widetilde K_i$. This proves that $\widetilde K_i$ is bounded.
\end{proof}

It is therefore enough to prove Theorem \ref{mainthm} for origin-symmetric convex bodies, since $|K_i|\le|\widetilde K_i|$.
The link of the many-body inequality with quadratic multi-marginal transport is through the pairwise interaction term. For the equal-weight barycenter cost,
\[
        c(x_1,\ldots,x_k)
        =\min_{z\in\R^n}\sum_{i=1}^k |x_i-z|^2
        =\frac1k\sum_{1\le i<j\le k}|x_i-x_j|^2 .
\]
Since
\[
        \sum_{1\le i<j\le k}|x_i-x_j|^2
        =(k-1)\sum_{i=1}^k |x_i|^2
        -2\S_2(x_1,\ldots,x_k),
\]
minimizing this quadratic cost over couplings with fixed marginals is
equivalent to maximizing $\S_2$. Thus $\S_2$ is the relevant
interaction part of the quadratic Gangbo--\'{S}wi\k{e}ch cost. Its
polarization reveals the pseudo-Euclidean structure used below.
For $X=(x_1,\ldots,x_k),Y=(y_1,\ldots,y_k)\in(\R^n)^k,$ one can define
\begin{equation}\label{bilinearform again}
        \mathcal B(X,Y)
        =\big\langle\sum_{i=1}^k x_i,\sum_{i=1}^k y_i\big\rangle
        -\sum_{i=1}^k\ip{x_i}{y_i}.
\end{equation}
Then $\mathcal B(X,X)=2\S_2(X)$ and we have 
\begin{equation}\label{BXX bound}
        \mathcal B(X,X)\le 2C_k,
\end{equation}
where the upper bound is exactly the many-body polarity assumption \eqref{constraint}.

For later use, we recall some notation for graphs in a general pseudo-Euclidean space. We write the pseudo-Euclidean space of signature $(n,m)$ as
\[
        \R^{n,m}=\R_x^n\oplus\R_y^m,
        \qquad
        \big\langle(x,y), (x',y')\big\rangle_{n,m}
        =\ip{x}{x'}-\ip{y}{y'},
\]
and denote the corresponding metric by
\[
        g_{n,m}=\sum_{\alpha=1}^n \dd x_\alpha^2
        -\sum_{\beta=1}^m \dd y_\beta^2 .
\]
Thus, for $v=(x,y)\in\R^{n,m}$,
\[
        \ip{v}{v}_{n,m}=g_{n,m}(v,v)=|x|^2-|y|^2 .
\]
As usual, a vector $v\in\R^{n,m}$ is called spacelike, null, or timelike according as
\[
        \ip{v}{v}_{n,m}>0,\qquad
        \ip{v}{v}_{n,m}=0,\qquad
        \ip{v}{v}_{n,m}<0 .
\]
An $n$-dimensional submanifold $\Sigma\subset\R^{n,m}$ is called spacelike if the metric induced on each tangent space is positive definite. If, in addition, its mean curvature vector vanishes identically, then $\Sigma$ is called a maximal submanifold. For more background on these notions, we refer to \cite{One83}.

Let $A\subset\R^n$ and let $f:A\to\R^m$ be $1$-Lipschitz. By Rademacher's theorem \cite[Section 3.1.2]{EvansGariepy}, $\|\D f_x\| \le1$ for a.e. $x\in A$, so $I-(\D f_x)^\top\D f_x$ is positive semidefinite a.e.. We define the pseudo-volume of the weakly spacelike graph of $f$ by
\begin{equation}\label{graph volume}
        \Vol_{n,m}(\operatorname{graph}_A f)
        =\int_A\sqrt{\det\big(I-(\D f_x)^\top\D f_x\big)}\dd x .
\end{equation}
If $f$ is defined only on $A$, one may take any global Lipschitz extension; the value in \eqref{graph volume} is independent of the choice of extension, since any two Lipschitz extensions that agree on \(A\) have the same approximate
differential for a.e. \(x\in A\).  Indeed, for the parametrization $F(x)=(x,f(x))$, at a.e. \(x\in A\) where \(f\) is approximately
differentiable,
\[
        (F^*g_{n,m})_x(v,w)
        =\ip{v}{w}-\ip{\D f_xv}{\D f_xw},
\]
and the Gram matrix is $I-(\D f_x)^\top\D f_x$.

Hence, by the discussion in introduction, we know that \(\mathcal B\) is a pseudo-Euclidean metric on \((\mathbb R^n)^k\) with signature \((n,n(k-1))\). We denote by \(\Vol_{\mathcal B}\) the induced \(n\)-dimensional pseudo-volume.

We now pass to the multi-marginal optimal transport theory. The following theorem collects consequences of Agueh–Carlier's barycenter theory \cite{AC}, which will be used for the subsequent analysis. The existence of a minimizer follows from Agueh--Carlier
\cite[Proposition~2.3]{AC}.

\begin{theorem}\label{AC theorem}
Let \(K_1,\ldots,K_k\subset\R^n\) be origin-symmetric convex bodies and set
\[
 \mu_i=\frac{\1_{K_i}}{|K_i|}\dd x,\qquad 
 i=1,\ldots,k.
\]
Then the functional
\begin{equation*}
 \mathcal J(\eta):=\sum_{i=1}^k W_2^2(\eta,\mu_i)
\end{equation*}
admits a unique minimizer \(\nu\) among all probability measures with finite second moment. We call \(\nu\) the equal-weight
Wasserstein barycenter of \(\mu_1,\ldots,\mu_k\). Moreover, the following hold:

\begin{enumerate}[label=\textup{(\arabic*)}]
\item \cite[Proposition 3.5 and Theorem 5.1]{AC}. The barycenter \(\nu\) is unique, origin-symmetric, and absolutely continuous with bounded density,
$ \dd\nu(z)=\bar h (z)\dd z .$
\item For each \(i\), by Brenier-McCann's theorem (\cite{Brenier,McCann1995}), there exists a convex potential $\varphi_i$ such that
\[
 T_i=\nabla\varphi_i,\qquad (T_i)_\#\nu=\mu_i.
\]
\item At Lebesgue-a.e. differentiability point $z$ of $T_i$ with $\bar h(z)>0$,
\begin{equation}\label{MA equation}
 \det \D T_i(z)=|K_i|\bar h(z).
\end{equation}

\item \cite[Remark 3.9]{AC}. It holds that
\begin{equation}\label{bc condition}
 \frac1k\sum_{i=1}^k T_i(z)=z,
 \qquad \nu\text{-}\mathrm{a.e. }\,z.
\end{equation}

\item Each \(T_i\) is odd \(\nu\)-${\rm {a.e.}}$.

\item \cite[Theorem 4.1 and Proposition~4.2]{AC}. The plan
\begin{equation}\label{canonical plan}
 \gamma=(T_1,\ldots,T_k)_\#\nu
\end{equation}
is optimal for the multi-marginal problem
\begin{equation}\label{MMOT}
 \sup_{\eta\in\Pi(\mu_1,\ldots,\mu_k)}
 \int_{K_1\times\cdots\times K_k}
 \S_2(x_1,\ldots,x_k)\,\dd\eta .
\end{equation}
\end{enumerate}
\end{theorem}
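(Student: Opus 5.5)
Most items of Theorem~\ref{AC theorem} are taken directly from the cited sources, so the plan is to check that the hypotheses of Agueh--Carlier and Brenier--McCann hold and then prove the parts that are not verbatim in \cite{AC}: origin-symmetry of $\nu$, oddness of the $T_i$, and the Monge--Amp\`ere equation \eqref{MA equation}. The measures $\mu_i$ have compact support and bounded densities, and they are absolutely continuous. So \cite[Proposition~2.3]{AC} gives existence of a minimizer of $\mathcal J$. Uniqueness, absolute continuity and boundedness of the density $\bar h$ come from \cite[Proposition~3.5, Theorem~5.1]{AC}; the density bound there is of the form $\|\bar h\|_\infty\le C\min_i\|d\mu_i/dx\|_\infty$-type. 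Since $\nu$ is absolutely continuous, the Brenier--McCann theorem gives the gradient maps $T_i=\nabla\varphi_i$ of item (2). The barycenter identity \eqref{bc condition} and the optimality of the plan $\gamma$ in item (6) are \cite[Remark~3.9, Theorem~4.1, Proposition~4.2]{AC}. The cost $\frac1k\sum_{i<j}|x_i-x_j|^2$ differs from $-\frac2k\S_2$ by terms that depend on one marginal at a time. Those terms integrate to the same constant against every $\eta\in\Pi(\mu_1,\ldots,\mu_k)$, so minimizing the cost is the same as maximizing $\int\S_2\,d\eta$, which is \eqref{MMOT}.

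For symmetry, let $\sigma(x)=-x$. Each $\mu_i$ is $\sigma$-invariant because $K_i=-K_i$. Also $W_2(\sigma_\#\eta,\mu_i)=W_2(\sigma_\#\eta,\sigma_\#\mu_i)=W_2(\eta,\mu_i)$, so $\mathcal J(\sigma_\#\nu)=\mathcal J(\nu)$. Uniqueness of the minimizer then forces $\sigma_\#\nu=\nu$. For oddness, define $\tilde T_i(z)=-T_i(-z)=\nabla\tilde\varphi_i(z)$ with $\tilde\varphi_i(z)=\varphi_i(-z)$, which is convex. Then $(\tilde T_i)_\#\nu=\sigma_\#(T_i)_\#\sigma_\#\nu=\sigma_\#\mu_i=\mu_i$. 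Brenier's uniqueness (the gradient of a convex function pushing $\nu$ to $\mu_i$ is unique $\nu$-a.e.) gives $\tilde T_i=T_i$ $\nu$-a.e., which is item (5).

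Item (3) is the main technical point. McCann's theorem \cite{McCann1995} gives the Monge--Amp\`ere equation in the Aleksandrov-Hessian sense: at $\nu$-a.e.\ point where $\varphi_i$ has an Aleksandrov second derivative, we have $\bar h(z)=\frac{1}{|K_i|}\1_{K_i}(T_i(z))\det \D^2\varphi_i(z)$. The plan is to use this form and then check two things. First, $T_i(z)\in K_i$ for $\nu$-a.e.\ $z$, since $T_i$ pushes $\nu$ onto $\mu_i$, which is concentrated on $K_i$; this makes the indicator equal to $1$. Second, at Lebesgue-a.e.\ differentiability point of $T_i=\nabla\varphi_i$, the Jacobian $\D T_i$ coincides with the Aleksandrov Hessian. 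This holds because Aleksandrov's theorem gives twice differentiability a.e., and where $\nabla\varphi_i$ is differentiable the two notions agree. Rearranging then gives $\det\D T_i(z)=|K_i|\bar h(z)$ on $\{\bar h>0\}$ up to a Lebesgue-null set.

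The subtle issue is that McCann's change of variables holds only $\nu$-a.e. The sets where it fails must be treated as Lebesgue-null inside $\{\bar h>0\}$, and this is true precisely because $\nu\ll dz$ with density $\bar h$. I expect this bookkeeping to be the only real obstacle; the remaining items are direct citations.
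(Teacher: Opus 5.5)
Your handling of existence, uniqueness, the symmetry of $\nu$, the oddness of the $T_i$, and the Monge--Amp\`ere equation matches the paper. Your treatment of (3) is in fact more careful than the paper's bare citation: you use the Aleksandrov-sense equation and note that $\nu$-null sets are Lebesgue-null inside $\{\bar h>0\}$.

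\textbf{The gap is item (6).} You treat the optimality of the specific plan $\gamma=(T_1,\ldots,T_k)_\#\nu$ as a direct citation of \cite[Theorem~4.1, Proposition~4.2]{AC}. Those results give less. They provide a unique optimizer $\widehat\gamma$ of the multi-marginal problem, in Gangbo--\'{S}wi\k{e}ch's form. They also show that $T_\#\widehat\gamma$ is the barycenter, where $T(X)=\frac1k\sum_i x_i$. They do not say that $\widehat\gamma$ is the plan assembled from the Brenier maps out of $\nu$. That identification is exactly what the paper's proof supplies:
\begin{enumerate}[label=\textup{(\alph*)}]
\item From $(T,\pi_i)_\#\widehat\gamma\in\Pi(\nu,\mu_i)$ one gets $W_2^2(\nu,\mu_i)\le\int|T(X)-x_i|^2\dd\widehat\gamma$ for each $i$.
\item The comparison inequality in the proof of \cite[Proposition~4.2]{AC} forces equality for every $i$.
\item Brenier uniqueness then gives $(T,\pi_i)_\#\widehat\gamma=(\mathrm{id},T_i)_\#\nu$.
\item Hence $x_i=T_i(T(X))$ for $\widehat\gamma$-a.e.\ $X$, and $\widehat\gamma=(T_1,\ldots,T_k)_\#\nu$.
\end{enumerate}

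You can also close the gap directly, without using the uniqueness of $\widehat\gamma$. For any $\eta\in\Pi(\mu_1,\ldots,\mu_k)$, since $(\pi_i,T)_\#\eta\in\Pi(\mu_i,T_\#\eta)$,
\[
\sum_{i=1}^k\int|x_i-T(X)|^2\dd\eta(X)\ge\mathcal J(T_\#\eta)\ge\mathcal J(\nu).
\]
On the other hand, the mean minimizes $y\mapsto\sum_i|a_i-y|^2$, and each $T_i$ is an optimal map from $\nu$ to $\mu_i$. Therefore
\[
\sum_{i=1}^k\int|x_i-T(X)|^2\dd\gamma(X)
=\int\sum_{i=1}^k\Bigl|T_i(z)-\frac1k\sum_{j=1}^kT_j(z)\Bigr|^2\dd\nu(z)
\le\int\sum_{i=1}^k|T_i(z)-z|^2\dd\nu(z)=\mathcal J(\nu).
\]
Hence $\gamma$ minimizes the barycentric cost $\sum_i|x_i-T(X)|^2=\frac{k-1}{k}\sum_i|x_i|^2-\frac2k\S_2(X)$ over $\Pi(\mu_1,\ldots,\mu_k)$. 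Equivalently, it maximizes $\int\S_2\dd\eta$, which is (6). Moreover, the forced equality in the middle step yields $\frac1k\sum_jT_j(z)=z$ for $\nu$-a.e.\ $z$. So (4) comes for free, rather than from \cite[Remark~3.9]{AC}.
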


\begin{proof}
The existence and uniqueness of the barycenter, and the \(L^\infty\) bound for its density in (1) and the first-order condition in (4), follow from Agueh--Carlier
\cite[Proposition~3.3, Proposition~3.5, Definition~3.9 and Theorem~5.1]{AC}. Applying the two-marginal Brenier-McCann's theorem to each pair \((\nu,\mu_i)\) gives a unique \(\nu\)-a.e. map
\(T_i=\nabla\varphi_i\) with \((T_i)_\#\nu=\mu_i\). The symmetry of $\nu$ follows from uniqueness, since each $\mu_i$ is invariant under $x\mapsto -x$.

For (3), the Monge--Amp\`ere equation for the Brenier map from $\nu$ to $\mu_i$ gives
\[
        \bar h(z)=\frac1{|K_i|}\det\D T_i(z)
\]
at a.e. differentiability point with $\bar h(z)>0$; see Brenier \cite{Brenier}.

To prove (5), set $\widetilde T_i(z)=-T_i(-z)$. The symmetries of $\nu$ and $\mu_i$ imply $(\widetilde T_i)_\#\nu=\mu_i$, and $\widetilde T_i$ is again the gradient of a convex function. By uniqueness of the Brenier map, $\widetilde T_i=T_i$ $\nu$-a.e., hence $T_i(-z)=-T_i(z)$ for $\nu$-a.e. $z$.

It remains to prove (6). We apply Agueh--Carlier's results with
\[
        p=k,\qquad d=n,\qquad \nu_i=\mu_i,\qquad \lambda_i=\frac1k,\qquad T(X)=\frac1k\sum_{i=1}^k x_i .
\]
Since each \(\mu_i\) is absolutely continuous, the hypotheses of
\cite[Theorem~4.1 and Proposition~4.2]{AC} are satisfied. Let
\(\widehat\gamma\in\Pi(\mu_1,\ldots,\mu_k)\) denote the unique optimizer of Agueh--Carlier's problem, it suffices to identify \(\widehat\gamma\) with the canonical plan \((T_1,\ldots,T_k)_\#\nu\).

In the equal-weight case, their barycenter problem is
\[
        \inf_\sigma \frac1{2k}\sum_{i=1}^k W_2^2(\mu_i,\sigma),
\]
which has the same minimizers as \eqref{MMOT}. Hence, by
the uniqueness of the barycenter and by \cite[Proposition~4.2]{AC},
\begin{equation*}
        T_\#\widehat\gamma=\nu .
\end{equation*}

 Let \(\pi_i\) be the \(i\)-th coordinate projection.
Since \((T,\pi_i)_\#\widehat\gamma\in\Pi(\nu,\mu_i)\), we have
\begin{equation}\label{OT coupling}
        W_2^2(\nu,\mu_i)
        \le
        \int_{K_1\times\cdots\times K_k} |T(X)-x_i|^2\,\dd\widehat\gamma(X),
        \qquad X=(x_1,\ldots,x_k).
\end{equation}
On the other hand, the comparison argument in the proof of
\cite[Proposition~4.2]{AC} gives the following inequality: for every
probability measure \(\sigma\) and every choice of couplings
\(\alpha_i\in\Pi(\mu_i,\sigma)\),
\[
        \frac1{2k}\sum_{i=1}^k
        \int_{K_i\times\R^n} |x_i-y|^2\,\dd\alpha_i(x_i,y)
        \ge
        \int_{K_1\times\cdots\times K_k} \frac1{2k}\sum_{i=1}^k |x_i-T(X)|^2\,\dd\widehat\gamma(X).
\]
Taking \(\sigma=\nu\) and choosing \(\alpha_i\) to be optimal couplings between
\(\mu_i\) and \(\nu\), we obtain
\[
        \frac1{2k}\sum_{i=1}^k W_2^2(\mu_i,\nu)
        \ge
        \int_{K_1\times\cdots\times K_k} \frac1{2k}\sum_{i=1}^k |x_i-T(X)|^2\,\dd\widehat\gamma(X).
\]
Combining this  with \eqref{OT coupling}, we have that the equality in
\eqref{OT coupling} holds for every \(i\). Therefore
   $(T,\pi_i)_\#\widehat\gamma$ is an optimal quadratic coupling from \(\nu\) to \(\mu_i\).

 Since \(\nu\) is absolutely continuous,  by Brenier's theorem, we have the uniqueness of the optimal coupling from \(\nu\) to \(\mu_i\).  Hence,   $(T,\pi_i)_\#\widehat\gamma         =
        (\mathrm{id},T_i)_\#\nu ,$  which implies that for every bounded Borel
function \(\varphi\),
 \[
 \begin{aligned}
  \int_{K_1\times\cdots\times K_k} \varphi(x_1,\ldots,x_k)\,\dd\widehat\gamma(X)=
 \int_{\R^n} \varphi\bigl(T_1(z),\ldots,T_k(z)\bigr)\,\dd\nu(z).
 \end{aligned}
 \]
Thus $ \widehat\gamma=(T_1,\ldots,T_k)_\#\nu .$

\end{proof}

We shall use only the support of the optimal plan constructed above.  We recall the standard Kantorovich duality theorem in a form adapted to $\S_2$.

\begin{lemma}[Kantorovich duality]\label{Kantorovich duality}
Let \(\gamma\) be an optimizer for \eqref{MMOT}. Then Kantorovich duality gives
\[
\sup_{\eta\in\Pi(\mu_1,\ldots,\mu_k)}
\int_{K_1\times\cdots\times K_k} \S_2\,\dd\eta
=
\inf_{\substack{\phi_i\in C(K_i)\\
\sum_{i=1}^k\phi_i(x_i)\ge \S_2(x_1,\ldots,x_k)}}
\sum_{i=1}^k\int_{K_i}\phi_i\,\dd\mu_i .
\]
Moreover, the dual infimum is attained. Thus there exist
\(\phi_i\in C(K_i)\), \(i=1,\ldots,k\), such that
\begin{equation}\label{dual ineq}
 \sum_{i=1}^k\phi_i(x_i)\ge \S_2(X),
 \qquad\forall\,X=(x_1,\ldots,x_k)\in K_1\times\cdots\times K_k,
\end{equation}
and
\begin{equation}\label{dual contact}
 \sum_{i=1}^k\phi_i(x_i)=\S_2(X),
 \qquad \gamma\text{-}\mathrm{a.e.}\,X.
\end{equation}
If $ \Gamma=\supp\gamma,$ then \(\Gamma\) is compact and
\begin{equation}\label{dual contact support}
 \sum_{i=1}^k\phi_i(x_i)=\S_2(X),
 \qquad\forall\, X\in\Gamma.
\end{equation}
If $\gamma$ is the barycentric optimal plan \eqref{canonical plan}, then $\Gamma$ is origin-symmetric.
\end{lemma}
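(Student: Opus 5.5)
The duality statement itself is taken as known from standard multi-marginal Kantorovich theory on compact spaces. The real work is the three consequences. For the duality and attainment on the compact sets $K_i$ with the continuous cost $\S_2$, I would cite the general result (e.g.\ Kellerer, or Gangbo--\'Swi\k{e}ch \cite{GS}). Attainment is obtained in the usual way. Starting from any admissible tuple, I replace the potentials successively by ``$\S_2$-conjugates''
\[
\phi_i(x_i)\mapsto \sup_{x_j\in K_j,\ j\ne i}\Bigl(\S_2(X)-\sum_{j\ne i}\phi_j(x_j)\Bigr).
\]
This keeps the constraint and does not increase the dual value. Each new $\phi_i$ is a supremum of functions affine in $x_i$, with coefficients $\sum_{j\ne i}x_j$ bounded because the $K_j$ are compact. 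So the family is uniformly Lipschitz. After normalizing additive constants (shifting by constants summing to zero) it is uniformly bounded, and Arzel\`a--Ascoli gives a minimizer in $\prod_i C(K_i)$.

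With optimal $\phi_i$ in hand, \eqref{dual ineq} is just admissibility. For \eqref{dual contact}, combine zero duality gap with optimality of $\gamma$:
\[
\int\Bigl(\sum_i\phi_i(x_i)-\S_2(X)\Bigr)\dd\gamma=\sum_i\int\phi_i\dd\mu_i-\int\S_2\dd\gamma=0.
\]
Here the marginals of $\gamma$ are the $\mu_i$. The integrand is nonnegative on $K_1\times\cdots\times K_k$ by \eqref{dual ineq}, so it vanishes $\gamma$-a.e.

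For \eqref{dual contact support}, note that $\Gamma$ is closed and contained in the compact set $K_1\times\cdots\times K_k$, so it is compact. The function $F(X)=\sum_i\phi_i(x_i)-\S_2(X)$ is continuous and nonnegative. The set $\{F>0\}$ is relatively open and $\gamma$-null, so it misses $\supp\gamma$. Hence $F=0$ on all of $\Gamma$.

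For origin-symmetry in the barycentric case, let $\sigma(X)=-X$. By Theorem~\ref{AC theorem}(1) and (5), $\nu$ is symmetric and each $T_i$ is odd $\nu$-a.e. Hence for bounded Borel $\psi$,
\[
\int\psi\circ\sigma\,\dd\gamma=\int\psi\bigl(-T_1(z),\ldots,-T_k(z)\bigr)\dd\nu=\int\psi\bigl(T_1(-z),\ldots,T_k(-z)\bigr)\dd\nu=\int\psi\,\dd\gamma.
\]
So $\sigma_\#\gamma=\gamma$, and the support of a $\sigma$-invariant measure is $\sigma$-invariant because $\sigma$ is a homeomorphism.

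The main obstacle is the attainment step: getting the compactness argument right under the constraint $\sum\phi_i\ge\S_2$ with continuous potentials. The multi-marginal conjugation must be done one coordinate at a time, and the normalization must keep all potentials bounded simultaneously. Everything else is routine.
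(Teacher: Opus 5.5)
Your proposal is correct and follows essentially the same route as the paper. Duality is cited from the compact multi-marginal theory, equality on all of $\Gamma$ follows because the continuous function $\sum_i\phi_i(x_i)-\S_2(X)$ is nonnegative and open $\gamma$-null sets miss $\supp\gamma$, and the symmetry of $\Gamma$ follows from the invariance of $\gamma$ under $X\mapsto -X$, which comes from the symmetry of $\nu$ and the oddness of the $T_i$. The only difference is that you spell out two steps the paper leaves to the citation. You prove attainment by coordinatewise $\S_2$-conjugation and Arzel\`a--Ascoli, and you derive \eqref{dual contact} from the zero duality gap; both arguments are sound.
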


\begin{proof}
Since  \(K_1,\ldots,K_k\) are compact and \(\S_2\) is continuous, compact
multi-marginal Kantorovich duality applies; see Kellerer
\cite{Kellerer84}, and also the
standard optimal-transport references
\cite{Santambrogio15, Villani09}. In the quadratic
case, the same dual and contact formulation appears in Gangbo--Święch
\cite{GS} and Agueh--Carlier
\cite{AC}. This gives the dual formula and dual attainment.

% Since $\sum_{i=1}^k\phi_i(x_i)\geq \S_2(X)$ on \(K_1\times\cdots\times K_k\), and $\sum_{i=1}^k\phi_i(x_i)= \S_2(X)$ for $\gamma\text{-}\mathrm{a.e.}\,X$, 
Assume that there is \(X_0=(x_{0,1},\ldots,x_{0,k})\in\supp\gamma\) such that $   \sum_{i=1}^k\phi_i(x_{0,i})> \S_2(X_0).$
By the continuity of \(\phi_1,\ldots,\phi_k\) and \(\S_2\), this strict
inequality also holds in a neighborhood of \(X_0\), which has positive
\(\gamma\)-measure, contradicting
\eqref{dual contact}. Thus $   \sum_{i=1}^k\phi_i(x_i)= \S_2(X)$
on \(\Gamma\).

Finally, if \(\gamma=(T_1,\ldots,T_k)_\#\nu\), then \(\nu\) is symmetric and the maps \(T_i\) are odd \(\nu\)-a.e.. Therefore \(\gamma\) is invariant under
\(X\mapsto -X\), and hence \(\Gamma=-\Gamma\).
\end{proof}

The following lemma is standard, which is a two-point coordinate-exchange consequence of the corresponding multi-marginal \(c\)-cyclical monotonicity.
\begin{lemma}\label{monoto results}
Let \(\Gamma\) be the support set obtained in Lemma
\ref{Kantorovich duality}. If
\[
 X=(x_1,\ldots,x_k),\qquad Y=(y_1,\ldots,y_k)
\]
belong to \(\Gamma\), then for every \(i=1,\ldots,k\),
\begin{equation}\label{monotonicity}
 \big\langle x_i-y_i,\sum_{j\ne i}(x_j-y_j)\big\rangle\ge0.
\end{equation}
Consequently,
\begin{equation}\label{monotonicity 1}
 \left|\sum_{i=1}^k(x_i-y_i)\right|^2
 -\sum_{i=1}^k|x_i-y_i|^2\ge0,
\end{equation}
or equivalently
\begin{equation}\label{B monotonicity}
 \mathcal B(X-Y,X-Y)\ge0 .
\end{equation}
\end{lemma}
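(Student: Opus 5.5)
We would derive \eqref{monotonicity} from the dual contact relation \eqref{dual contact support} by a two-point coordinate exchange. Fix $X,Y\in\Gamma$ and an index $i$. Let $X'$ be $X$ with its $i$-th coordinate replaced by $y_i$, and let $Y'$ be $Y$ with its $i$-th coordinate replaced by $x_i$. Both $X'$ and $Y'$ lie in $K_1\times\cdots\times K_k$, since $\Gamma\subset K_1\times\cdots\times K_k$. Hence the dual inequality \eqref{dual ineq} applies to them:
\[
\sum_{j}\phi_j(x'_j)\ge\S_2(X'),\qquad \sum_j\phi_j(y'_j)\ge \S_2(Y').
\]
Adding these two inequalities, the left-hand side is exactly $\sum_j\phi_j(x_j)+\sum_j\phi_j(y_j)$, because we have only swapped $x_i$ and $y_i$ between the two points. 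By \eqref{dual contact support} this sum equals $\S_2(X)+\S_2(Y)$. We therefore obtain the exchange inequality
\[
\S_2(X)+\S_2(Y)\ge \S_2(X')+\S_2(Y').
\]

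Next we expand both sides. All terms of $\S_2$ not involving the $i$-th slot cancel. What remains is
\[
\Big\langle x_i,\sum_{j\ne i}x_j\Big\rangle+\Big\langle y_i,\sum_{j\ne i}y_j\Big\rangle-\Big\langle y_i,\sum_{j\ne i}x_j\Big\rangle-\Big\langle x_i,\sum_{j\ne i}y_j\Big\rangle\ge 0 .
\]
This factors as $\langle x_i-y_i,\sum_{j\ne i}(x_j-y_j)\rangle\ge0$, which is \eqref{monotonicity}.

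For \eqref{monotonicity 1}, write $d_j=x_j-y_j$ and sum \eqref{monotonicity} over $i$. This gives
\[
\sum_i\sum_{j\ne i}\ip{d_i}{d_j}=\Big|\sum_i d_i\Big|^2-\sum_i|d_i|^2\ge0 .
\]
By the definition \eqref{bilinearform again}, the left-hand side is exactly $\mathcal B(X-Y,X-Y)$, which yields \eqref{B monotonicity}.

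The only point needing care is that \eqref{dual contact support} must hold at every point of $\Gamma$, not merely $\gamma$-a.e. Lemma~\ref{Kantorovich duality} provides this through the continuity of the $\phi_i$. The exchanged points need only lie in the product of the $K_i$, where \eqref{dual ineq} holds everywhere. The rest is routine algebra, so no step is a real obstacle.
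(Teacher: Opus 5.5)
Your proposal is correct and follows the paper's proof essentially step for step. You exchange the $i$-th coordinates, apply \eqref{dual contact support} at $X,Y$ and \eqref{dual ineq} at $X',Y'$, expand the slot-$i$ terms of $\S_2$, and sum over $i$ to obtain \eqref{monotonicity 1} and \eqref{B monotonicity}. You also explicitly note that $X',Y'$ lie in $K_1\times\cdots\times K_k$, which the paper uses only implicitly.
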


\begin{proof}
Fix $1\le i \le k$. Let $X'$ and $Y'$ be obtained from $X$ and $Y$ by exchanging only their $i$-th coordinates:
\[
        X'=(x_1,\ldots,y_i,\ldots,x_k),
\]
\[
        Y'=(y_1,\ldots,x_i,\ldots,y_k).
\]
Using \eqref{dual contact support} at $X,Y$ and \eqref{dual ineq} at $X',Y'$, we get
\[
        \S_2(X)+\S_2(Y)
        =\sum_{\ell=1}^k\phi_\ell(x_\ell)+\sum_{\ell=1}^k\phi_\ell(y_\ell)
        \ge \S_2(X')+\S_2(Y').
\]
Only the pairs involving the $i$-th coordinate change, and therefore
\[
        \S_2(X)+\S_2(Y)-\S_2(X')-\S_2(Y')
        =\big\langle x_i-y_i,\sum_{j\ne i}(x_j-y_j)\big\rangle .
\]
This proves \eqref{monotonicity}.  Summing \eqref{monotonicity} over $i$ gives
 \eqref{monotonicity 1}, which is equivalent to \eqref{B monotonicity}.
\end{proof}

Once we have \eqref{B monotonicity}, we can prove that the support of the optimal plan is the graph of an odd $1$-Lipschitz map. Indeed, we shall use the following barycentric decomposition of $(\R^n)^k$.
Let
\[
        \mathcal H
        =
        \left\{(v_1,\ldots,v_k)\in(\R^n)^k:\,
        \sum_{i=1}^k v_i=0\right\}.
\]
For \(X=(x_1,\ldots,x_k)\in(\R^n)^k\), set
\begin{equation*}
        p=\sum_{i=1}^k x_i,\qquad
        e=\sqrt{\frac{k-1}{k}}\,p,\qquad
        w_i=x_i-\frac pk.
\end{equation*}
Note that $w=(w_1,\ldots,w_k)\in\mathcal H.$ This defines a linear isomorphism
\[
        \Phi:(\R^n)^k\longrightarrow \R^n\oplus\mathcal H,
        \qquad
        \Phi(X)=(e,w),
\]
with inverse
\[
        x_i=w_i+\frac{1}{\sqrt{k(k-1)}}\,e,
        \qquad i=1,\ldots,k .
\]
If \(Y=(y_1,\ldots,y_k)\) is written analogously as \(\Phi(Y)=(e',w')\), then
\begin{equation}\label{new coordinate}
        \mathcal B(X,Y)
        =
        \ip{e}{e'}-\sum_{i=1}^k\ip{w_i}{w_i'}
        =
        \ip{e}{e'}-\ip{w}{w'} .
\end{equation}
Thus \(\Phi\) identifies \(((\R^n)^k,\mathcal B)\) with the pseudo-Euclidean space \(\R^n\oplus\mathcal H\), equipped with the quadratic form $ (e,w)\longmapsto |e|^2-|w|^2.$ 

We now apply this decomposition to the support of the optimal plan.

\begin{proposition}\label{P projection}
Let $\gamma=(T_1,\ldots,T_k)_\#\nu$ be the optimal plan obtained in
Theorem~\ref{AC theorem}. Denote $\Gamma=\supp\gamma$. Then the projection
\[
        P:(\R^n)^k\to\R^n,
        \qquad
        P(X)=\sum_{i=1}^k x_i,
\]
is injective on \(\Gamma\). Hence \(\Phi(\Gamma)=\{(e,f(e)):e\in A\}\) is the graph of a map \(f:A\to\mathcal H\), where \(A=P(\Gamma)\subset\mathbb R^n\).
Moreover, \(A\) is compact and origin-symmetric, and \(f\) is odd and
\(1\)-Lipschitz.

\end{proposition}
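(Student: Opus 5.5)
The plan is to read everything off the inequality \eqref{B monotonicity} of Lemma~\ref{monoto results}, rewritten in the barycentric coordinates $\Phi$. For $X,Y\in\Gamma$ write $\Phi(X)=(e,w)$ and $\Phi(Y)=(e',w')$. By linearity of $\Phi$ and \eqref{new coordinate},
\[
 \mathcal B(X-Y,X-Y)=|e-e'|^2-|w-w'|^2 ,
\]
so \eqref{B monotonicity} becomes
\[
 |w-w'|\le |e-e'|,\qquad \forall\,X,Y\in\Gamma .
\]
Injectivity of $P$ on $\Gamma$ follows at once. If $P(X)=P(Y)$, then $e=e'$, hence $w=w'$, and therefore $X=\Phi^{-1}(e,w)=Y$. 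Since $e=\sqrt{(k-1)/k}\,P(X)$, the first component of $\Phi$ restricted to $\Gamma$ is injective. So $\Phi(\Gamma)$ is the graph of a well-defined map $e\mapsto w$, and the displayed inequality is exactly the statement that this map is $1$-Lipschitz.

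A bookkeeping point is the domain. The proposition writes $A=P(\Gamma)$ but parametrizes the graph by $e$. I would define $f$ on $\sqrt{(k-1)/k}\,P(\Gamma)$ and identify this set with $A$ up to the fixed dilation, or equivalently regard $f$ as a function of $p$ composed with that scalar. The dilation does not affect compactness, symmetry, or oddness. I will state the convention explicitly so that the Lipschitz constant $1$ is measured in the $e$-variable, since in the $p$-variable it would become $\sqrt{(k-1)/k}$.

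For the remaining properties:
\begin{enumerate}[label=\textup{(\alph*)}]
\item \emph{Compactness.} $\Gamma$ is compact by Lemma~\ref{Kantorovich duality}; it is closed and contained in $K_1\times\cdots\times K_k$. Hence $A=P(\Gamma)$ is compact as the continuous image of a compact set.
\item \emph{Symmetry of $A$.} Lemma~\ref{Kantorovich duality} gives $\Gamma=-\Gamma$ for the barycentric plan, and $P$ is linear, so $A=-A$.
\item \emph{Oddness of $f$.} If $(e,f(e))\in\Phi(\Gamma)$, then by linearity of $\Phi$ and $\Gamma=-\Gamma$ we have $(-e,-f(e))\in\Phi(\Gamma)$. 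Injectivity then forces $f(-e)=-f(e)$.
\end{enumerate}
Finally, $f$ takes values in $\mathcal H$ by construction of $w$.

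I do not expect a real obstacle, since the substantive input, the two-point exchange inequality, is already Lemma~\ref{monoto results}. The only delicate point is that this lemma must hold for every pair of points of $\Gamma=\supp\gamma$, not just $\gamma$-a.e. That is guaranteed because Lemma~\ref{Kantorovich duality} upgrades the contact identity \eqref{dual contact support} to all of $\Gamma$ via continuity of the $\phi_i$. Beyond that, the main care needed is the $e$ versus $p$ normalization just described.
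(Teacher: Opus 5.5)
Your proposal is correct and follows the paper's proof essentially line by line: you rewrite \eqref{B monotonicity} via \eqref{new coordinate} as $|w-w'|\le|e-e'|$ on all of $\Gamma$, deduce injectivity and the $1$-Lipschitz graph property from it, and obtain compactness and symmetry from those of $\Gamma$. Your remark about parametrizing by $e=\sqrt{(k-1)/k}\,p$ rather than by $p\in P(\Gamma)$ usefully clarifies a notational shortcut in the paper. The $e$-normalization is also the one needed later, so that $|e|^2-|f(e)|^2=\mathcal B(X,X)$.
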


\begin{proof}
By Lemma~\ref{monoto results},
\[
        \mathcal B(X-Y,X-Y)\ge0,
        \qquad \forall\,X,Y\in\Gamma .
\]
Writing \(\Phi(X)=(e,w)\) and \(\Phi(Y)=(e',w')\), and using
\eqref{new coordinate}, this becomes
\[
        |e-e'|^2-|w-w'|^2\ge0 .
\]
In particular, if \(e=e'\), then \(w=w'\), hence \(X=Y\).  Since
\(e=\sqrt{(k-1)/k}\,P(X)\), this is equivalent to the injectivity of \(P\)
on \(\Gamma\).

Let $  A=P(\Gamma).$ The preceding injectivity shows that \(\Phi(\Gamma)\) is the graph of a
map \(f:A\to\mathcal H\).  Since \(\Gamma\) is compact and \(\Phi\) is
continuous, \(A\) is compact.  Moreover, for any  \(e,e'\in A\),
\[
        \big|f(e)-f(e')\big|^2\le |e-e'|^2 ,
\]
so \(f\) is \(1\)-Lipschitz.

The optimal plan \(\gamma\) is invariant under \(X\mapsto -X\), because the barycenter \(\nu\) is origin-symmetric and the Brenier maps \(T_i\) are odd \(\nu\)-a.e.. Hence \(\Gamma=-\Gamma\). Since \(\Phi\) is linear, this implies \(\Phi(\Gamma)=-\Phi(\Gamma)\). Therefore \(A=-A\), and the graph representation gives $f(-e)=-f(e)$ for any $ e\in A.$
\end{proof}

\section{Wasserstein barycenters and a Monge--Amp\`ere lower bound}\label{sec of bc and MA}

We keep the notation of Section~\ref{Preliminaries}. Recall that \(\mu_i=|K_i|^{-1}{\bf 1}_{K_i}\dd x\), \(\dd \nu=\bar h\dd z\) is the equal-weight Wasserstein barycenter of the \(\mu_1,\ldots,\mu_k\),
\(T_i=\nabla\varphi_i\) is the Brenier map from \(\nu\) to \(\mu_i\), and \(\gamma=(T_1,\ldots,T_k)_\#\nu\) is the corresponding quadratic multi-marginal optimizer. Let \(\Gamma=\supp\gamma\). Recall $P:(\R^n)^k\to\R^n$ is defined in Proposition \ref{P projection} as $P(X)=\sum_i x_i$. Let $A=P(\Gamma)$. 
By \eqref{monotonicity 1}, the inverse map $F=P^{-1}:A\to\Gamma$ is $1$-Lipschitz in the  Euclidean norm. Write
\[
 F(p)=\bigl(W_1(p),\ldots,W_k(p)\bigr).
\]
Then
\begin{equation}\label{F 1-lip}
 \sum_{i=1}^k\big|W_i(p)-W_i(q)\big|^2\le|p-q|^2,
 \qquad
 \sum_{i=1}^kW_i(p)=p.
\end{equation}
By \eqref{bc condition}, \eqref{canonical plan} and \eqref{F 1-lip} we have 
\begin{equation*}
 W_i(p)=T_i(p/k),
\qquad\text{for a.e. }p\text{ with respect to }h(p)\dd p,
\end{equation*}
where \begin{equation}\label{desity}
h (p)=k^{-n}\bar h(p/k),
 \qquad \int_{\R^n}h(p)\dd p=1.
\end{equation}

Let $E=A\cap\{p:h(p)>0\}$. For $\text{a.e. }p\in E$,  we have
\begin{equation*}
 M_i(p):=\D W_i(p)=\frac1k\D T_i(p/k),
 \qquad
 M_i=M_i^\top\geq0,
 \qquad
 \sum_{i=1}^kM_i=I.
\end{equation*}
In particular, $M_i\geq 0$ and $I-M_i\geq0$.  By \eqref{MA equation},
\begin{equation}\label{MAE}
 \det M_i(p)=|K_i| h(p),
 \qquad\text{for a.e. }p\in E.
\end{equation}
With respect to the bilinear form \(\mathcal B\) in \eqref{bilinearform again}, the pullback metric induced by \(F\) satisfies, for \(\xi_1,\xi_2\in T_pA\simeq \mathbb{R}^n\),
\begin{align*}
(F^*\mathcal B)_p(\xi_1,\xi_2)
=
\langle \xi_1,\xi_2\rangle
-
\sum_{i=1}^k
\big\langle M_i(p)\xi_1,M_i(p)\xi_2\big\rangle
=
\xi_1^\top
G(p)
\xi_2,
\end{align*}
where $G(p)=I-\sum_{i=1}^kM^2_i(p)$. Since $M_i-M_i^2\geq 0$, one has $G\geq0$. By the area formula, we have
%or equivalently the change from the $p$-coordinate to the graph coordinate $e=\sqrt{(k-1)/k}\,p$, 
\begin{equation}\label{vol formula}
 \Vol_{\mathcal B}(\Gamma)
 =\int_{A}\sqrt{\det G(p)}\dd p
 \ge\int_E\sqrt{\det G(p)}\dd p.
\end{equation}
The required pointwise estimate follows from a direct application of Minkowski's determinant inequality.
\begin{lemma}\label{det lemma}
Let $M_1,\ldots,M_k$ be positive semidefinite $n\times n$ matrices satisfying $\sum_iM_i=I$. Let $G=I-\sum_iM_i^2$. Then
\begin{equation}\label{det ineq}
 \det(G)^{1/n}
 \ge k(k-1)
 \left(\prod_{i=1}^k\det M_i\right)^{2/(kn)}.
\end{equation}
For $k=2$, equality is an identity.  If $k\ge3$ and all $M_i$ are positive definite, equality holds if and only if
\begin{equation}\label{matrix rigidity}
 M_1=\cdots=M_k=\frac1k I.
\end{equation}
\end{lemma}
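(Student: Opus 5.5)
The plan is to write $G$ as a sum of positive semidefinite pieces, apply Minkowski's determinant inequality twice, and finish with AM--GM. Since $\sum_i M_i=I$, we have $I-M_i=\sum_{j\ne i}M_j\ge0$. Moreover $M_i$ commutes with $I-M_i$, so
\[
G=\sum_{i=1}^k\bigl(M_i-M_i^2\bigr)=\sum_{i=1}^k M_i^{1/2}(I-M_i)M_i^{1/2},
\]
and every summand is positive semidefinite.

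The steps are as follows.
\begin{enumerate}[label=\textup{(\alph*)}]
\item Set $a_i=\det(M_i)^{1/n}$. Minkowski's inequality $\det(\sum_\ell A_\ell)^{1/n}\ge\sum_\ell\det(A_\ell)^{1/n}$ for positive semidefinite $A_\ell$, applied to the decomposition above, gives $\det(G)^{1/n}\ge\sum_i a_i\det(I-M_i)^{1/n}$.
\item Applying Minkowski again to $I-M_i=\sum_{j\ne i}M_j$ gives $\det(I-M_i)^{1/n}\ge\sum_{j\ne i}a_j$. Hence $\det(G)^{1/n}\ge\sum_{i\ne j}a_ia_j$, a sum over $k(k-1)$ ordered pairs.
\item AM--GM over these $k(k-1)$ terms gives
\[
\sum_{i\ne j}a_ia_j\ge k(k-1)\Bigl(\prod_{i\ne j}a_ia_j\Bigr)^{1/(k(k-1))}=k(k-1)\prod_i a_i^{2/k},
\]
because each $a_i$ appears $2(k-1)$ times in the product. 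This is exactly \eqref{det ineq}.
\end{enumerate}

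For $k=2$ we have $M_2=I-M_1$, so $G=I-M_1^2-(I-M_1)^2=2M_1(I-M_1)=2M_1^{1/2}M_2M_1^{1/2}$. Therefore $\det(G)^{1/n}=2(\det M_1\det M_2)^{1/n}$, which is \eqref{det ineq} with equality, since $k(k-1)=2$.

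For the equality case with $k\ge3$ and all $M_i>0$, every inequality in the chain must be an equality, and we will use two of them.
\begin{enumerate}[label=\textup{(\alph*)}]
\item Equality in AM--GM forces $a_ia_j$ to be constant over $i\ne j$. With $k\ge3$ and all $a_i>0$, comparing $a_ia_j$ with $a_ia_l$ for distinct $i,j,l$ gives $a_j=a_l$, so all $a_i$ are equal.
\item Equality in $\det(\sum_{j\ne i}M_j)^{1/n}=\sum_{j\ne i}\det(M_j)^{1/n}$ forces the positive definite $M_j$, $j\ne i$, to be pairwise proportional.
\end{enumerate}
For (b) we use the equality case of Minkowski for positive definite matrices: equality for two matrices holds iff they are proportional. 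For several summands we argue by induction: equality in the full sum forces equality in each two-step split $\det(A+B)^{1/n}=\det(A)^{1/n}+\det(B)^{1/n}$ with $A=M_{j_1}$ and $B$ the sum of the remaining matrices. This makes $M_{j_1}$ proportional to $B$, and the argument recurses inside $B$.

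Since $k\ge3$, letting $i$ vary shows every pair $M_j,M_l$ lies in a common set $\{j\ne i\}$. Hence all $M_i$ are positive multiples of one another, $M_i=c_iM$. Equal determinants then give equal $c_i$, so all $M_i$ are equal, and $\sum_iM_i=I$ yields \eqref{matrix rigidity}. Conversely, if $M_i=\frac1kI$ for all $i$, then $G=(1-\frac1k)I$, so $\det(G)^{1/n}=\frac{k-1}{k}=k(k-1)\cdot k^{-2}$, which is the right-hand side of \eqref{det ineq}.

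The step I expect to need the most care is the multi-term Minkowski equality case: justifying that equality in the sum propagates to each partial split, and checking that positive definiteness is what rules out degenerate equality. The rest is routine.
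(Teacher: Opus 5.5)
Your proposal is correct and follows essentially the same route as the paper: the decomposition $G=\sum_i M_i(I-M_i)$, two applications of Minkowski's determinant inequality, AM--GM, and for the equality case the AM--GM equality (equal determinants) together with the Minkowski equality case (proportionality). You also give more detail than the paper on two points it leaves implicit: how equality in the multi-term Minkowski sum passes to each two-term split, and why $k\ge3$ makes every pair $M_j,M_l$ proportional.
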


\begin{proof}
By the assumptions, it is easy to see that  $M_i(I-M_i)$ is  positive semidefinite, and
\[
 G=\sum_{i=1}^kM_i(I-M_i).
\]
It follows from Minkowski's determinant inequality that
\begin{align*}
 \det(G)^{1/n}
 \ge\sum_{i=1}^k\det\bigl(M_i(I-M_i)\bigr)^{1/n}=\sum_{i=1}^k (\det M_i)^{1/n}\det(I-M_i)^{1/n}.
\end{align*}
Since $I-M_i=\sum_{j\ne i}M_j$, by Minkowski's inequality again, we have
\[
 \det(I-M_i)^{1/n}\ge\sum_{j\ne i}(\det M_j)^{1/n}.
\]
Consequently,
\[
 \det(G)^{1/n}\ge2\sum_{i<j}(\det M_i)^{1/n}(\det M_j)^{1/n}\geq k(k-1) \left(\prod_{i=1}^k(\det M_i)^{1/n}\right)^{2/k},
\]
 which proves \eqref{det ineq}.

If $k=2$, a direct computation shows that \eqref{det ineq} is an identity. Assume $k\ge3$ and all $M_i$ are positive definite.  Equality in the last inequality implies $(\det M_1)^{1/n}=\cdots=(\det M_k)^{1/n}$. Equality in the Minkowski inequality implies that all $M_i$ are proportional.  Hence the matrices are equal, and $\sum_iM_i=I$ gives \eqref{matrix rigidity}.  

\end{proof}

Now we can give a lower bound of the pseudo-volume of $\Gamma.$
\begin{proposition}\label{lower vol}
For the optimal graph $\Gamma$ associated with $K_1,\ldots,K_k$,
\begin{equation}\label{lower volume}
 \Vol_{\mathcal B}(\Gamma)
 \ge\big(k(k-1)\big)^{n/2}
 \left(\prod_{i=1}^k|K_i|\right)^{1/k}.
\end{equation}
\end{proposition}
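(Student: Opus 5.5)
We start from the area formula \eqref{vol formula}, which gives $\Vol_{\mathcal B}(\Gamma)\ge\int_E\sqrt{\det G(p)}\dd p$. For a.e. $p\in E$ the matrices $M_i(p)$ are positive semidefinite and satisfy $\sum_iM_i=I$, so Lemma~\ref{det lemma} applies pointwise. Combining it with the Monge--Amp\`ere relation \eqref{MAE}, $\det M_i(p)=|K_i|h(p)$, we get for a.e. $p\in E$
\[
\sqrt{\det G(p)}\ge \big(k(k-1)\big)^{n/2}\left(\prod_{i=1}^k\det M_i(p)\right)^{1/k}
=\big(k(k-1)\big)^{n/2}\left(\prod_{i=1}^k|K_i|\right)^{1/k} h(p).
\]
Here we raise \eqref{det ineq} to the power $n/2$. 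The exponent is $\tfrac{2}{kn}\cdot\tfrac n2=\tfrac1k$, and $\prod_i h^{1/k}=h$.

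Integrating over $E$ yields
\[
\int_E\sqrt{\det G}\dd p\ge\big(k(k-1)\big)^{n/2}\left(\prod_i|K_i|\right)^{1/k}\int_E h(p)\dd p.
\]
So it remains to show $\int_E h\dd p=1$. By \eqref{desity}, $h\dd p$ is a probability measure, namely the pushforward of $\nu$ under $z\mapsto kz$. By \eqref{bc condition} and \eqref{canonical plan}, it equals $P_\#\gamma$. Since $\gamma$ is concentrated on $\Gamma$, the measure $h\dd p$ is concentrated on $P(\Gamma)=A$, and it trivially lives on $\{h>0\}$. Hence $\int_E h=\int_{\R^n}h=1$, which gives \eqref{lower volume}.

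The main point needing care is the a.e. bookkeeping. We need three things:
\begin{itemize}
\item the identification $W_i(p)=T_i(p/k)$ holds $h\dd p$-a.e.;
\item the derivative $\D W_i$ (from the Lipschitz map $F$ via Rademacher) coincides with $\frac1k\D T_i(p/k)$ a.e. on $E$;
\item \eqref{MA equation} holds at those points.
\end{itemize}
I would handle this by restricting to the full-measure subset of $E$ consisting of Lebesgue density points of $\{h>0\}\cap A$ where both $W_i$ and the Alexandrov/Brenier differential of $T_i$ exist. Two differentiable functions that agree on a set having density one at $p$ have equal derivatives at $p$. These facts are exactly what the preceding discussion records before \eqref{MAE}, so I would take them as given. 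The positive semidefiniteness needed in Lemma~\ref{det lemma} then comes from the convexity of $\varphi_i$.
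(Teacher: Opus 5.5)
Your proposal is correct and follows the paper's proof. You raise the determinant inequality of Lemma~\ref{det lemma} to the power $n/2$, combine it with \eqref{MAE} to get $\sqrt{\det G}\ge (k(k-1))^{n/2}(\prod_i|K_i|)^{1/k}h$ a.e.\ on $E$, and integrate using \eqref{vol formula} and \eqref{desity}. Your check that $h\dd p=P_\#\gamma$ is concentrated on $A$, so that $\int_E h\dd p=1$, makes explicit a step the paper leaves implicit.
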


\begin{proof}
By \eqref{det ineq} and \eqref{MAE}, for a.e. \(p\in E\), we have
\begin{align*}
 \sqrt{\det G(p)}
 \ge\big(k(k-1)\big)^{n/2}
 \left(\prod_i\det M_i(p)\right)^{1/k}=\big(k(k-1)\big)^{n/2}
 \left(\prod_i|K_i|\right)^{1/k} h(p).
\end{align*}
Integrating over $E$, and then using \eqref{desity} and \eqref{vol formula}, we have the desired estimate \eqref{lower volume}.
\end{proof}

\section{A pseudo-Euclidean volume estimate}\label{pse vol estimate}

In this section, we prove the upper estimate for $\Vol_{\mathcal B}(\Gamma)$. By Proposition \ref{P projection}, the upper bound has been reduced to a purely pseudo-Euclidean volume comparison. Indeed, since \(\Phi(\Gamma)=\{(e,f(e)):e\in A\}\), we have
\[
        \Vol_{\mathcal B}(\Gamma)
        =
        \int_A
        \sqrt{\det\bigl(I-(\D f_x)^\top \D f_x\bigr)}\,\dd x .
\]
Moreover,
\[
        0\le |x|^2-|f(x)|^2\le 2C_k.
\]
Thus \eqref{up vol esti} follows from the following lemma, applied with \(R=(2C_k)^{1/2}\).

\begin{lemma}\label{upper bound of 1-lip}
Let \(R>0\). Let \(A=-A\subset\R^n\) be bounded and measurable, and let \(f:A\to\R^m\) be odd and \(1\)-Lipschitz. Assume that
\begin{equation}\label{pseudoball}
 0\le |x|^2-|f(x)|^2\le R^2,
 \qquad \forall\,x\in A.
\end{equation}
Then
\begin{equation}\label{upperbd of 1-lip}
 \int_A\sqrt{\det\bigl(I-(\D f_x)^\top \D f_x\bigr)}\dd x
 \le |B^n|R^n.
\end{equation}
The estimate is sharp on every spacelike $n$-plane through the origin.
\end{lemma}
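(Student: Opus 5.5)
Plan: the paper hints at the route — Li's Dirichlet theory for maximal graphs plus a monotonicity argument — so I will follow that. I first reduce to a spacelike-boundary problem on a convex domain and then compare pseudo-volumes via calibration/maximality.

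\emph{Step 1 (reduction).} Extend $f$ from $A$ to an odd $1$-Lipschitz map on a larger set in a controlled way. The natural domain is $D=\{x:|x|^2-|f(x)|^2\le R^2\}$, but this is not defined off $A$. Instead, I use Kirszbraun to extend $f$ to a $1$-Lipschitz map $\tilde f$ on $\R^n$, and symmetrize by replacing it with $\frac12(\tilde f(x)-\tilde f(-x))$, which stays odd and $1$-Lipschitz. Oddness gives $f(0)=0$ when $0\in A$; in general, $|f(x)-f(-x)|\le 2|x|$ still yields $|f(x)|\le|x|$. The integrand is nonnegative, so it suffices to bound the integral over the region
\[
\Omega=\{x:\ |x|^2-|\tilde f(x)|^2< R^2\}\supset A .
\]
(More precisely, over $A$ up to its boundary, which can be handled by taking $R'\downarrow R$.)

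\emph{Step 2 (replace by a maximal graph).} Since the pseudo-area $\int\sqrt{\det(I-\D f^\top\D f)}$ is concave in the spacelike regime, maximal graphs maximize it among weakly spacelike graphs with the same boundary data. I would approximate $\tilde f$ by strictly contracting smooth odd maps $(1-\epsilon)\tilde f*\rho_\delta$ and, on a smooth approximating domain, solve the Dirichlet problem for the maximal-graph system with these boundary values using Li's theory. The maximal solution $u$ is spacelike, and by uniqueness it is odd. Using the standard calibration inequality for maximal graphs in $\R^{n,m}$ (the volume form of the maximal graph, extended parallelly along the normal directions in the Kim--McCann--Warren spirit), I get $\Vol(\operatorname{graph}\tilde f|_\Omega)\le \Vol(\operatorname{graph}u|_\Omega)$.

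\emph{Step 3 (monotonicity).} For the maximal spacelike submanifold $\Sigma=\operatorname{graph}u$ through the origin, set $r^2=\langle X,X\rangle_{n,m}=|x|^2-|u(x)|^2$. Maximality gives $\Delta_\Sigma r^2=2n$. Integrate this over $\Sigma_t=\{r<t\}$, where $\nabla_\Sigma r^2$ is the tangential projection of $2X$. Since $\Sigma$ is spacelike, the induced gradient satisfies $|\nabla_\Sigma r|\le 1$; this is the reversed Cauchy--Schwarz inequality in the spacelike/Lorentzian sense, with the sign needing care. That yields $\frac{d}{dt}\bigl(t^{-n}\Vol(\Sigma_t)\bigr)\le 0$, i.e. the density ratio is \emph{nonincreasing}, the reverse of the Euclidean case. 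Hence $\Vol(\Sigma_R)\le \lim_{t\to0}t^{-n}\Vol(\Sigma_t)\,R^n=|B^n|R^n$, because at the origin $\Sigma$ is tangent to a spacelike plane and the region $\{r<t\}$ in the induced metric is asymptotically a unit ball. The boundary constraint $r\le R$ on $\partial\Omega$ ensures $\Sigma\cap\{r<R\}$ captures the relevant part; oddness ensures $0\in\Sigma$.

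\emph{Main obstacle.} The hardest step is Step 2 combined with the boundary behavior. The comparison graph must lie in $\{r\le R\}$ over the same domain, and Li's existence theory needs boundary data that is strictly spacelike and a domain with suitable convexity. I would first try taking $\Omega$ to be a large ball and deforming the data, then pass to the limit $\epsilon,\delta\to0$ using the upper semicontinuity of the concave pseudo-area functional. Sharpness is immediate: on a spacelike plane, $u$ is linear and $\{r<R\}$ is an ellipsoid in $x$ with pseudo-volume $|B^n|R^n$.
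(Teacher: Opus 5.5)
Your outline follows the paper's route: odd Kirszbraun extension, smoothing and contraction, Li's Dirichlet problem with volume maximality, then density monotonicity for the maximal graph. However, the decisive inequality in Step 3 points the wrong way, and as written that step yields nothing.

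Since $\nabla_\Sigma(r^2/2)=F^\top$ and the normal space of a spacelike $n$-plane in $\R^{n,m}$ is negative definite, $r^2=|F^\top|^2+\langle F^\perp,F^\perp\rangle_{n,m}\le|F^\top|^2$. That is, $|\nabla_\Sigma r|\ge1$, not $\le1$, and this is exactly what is needed. The divergence theorem gives $nV(t)=t\int_{\{r=t\}}|\nabla_\Sigma r|\,dA$ and the coarea formula gives $V'(t)=\int_{\{r=t\}}|\nabla_\Sigma r|^{-1}\,dA$. Hence $tV'(t)-nV(t)=t\int_{\{r=t\}}\bigl(|\nabla_\Sigma r|^{-1}-|\nabla_\Sigma r|\bigr)\,dA$, which is $\le0$ precisely when $|\nabla_\Sigma r|\ge1$. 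Your Euclidean-type bound $|\nabla_\Sigma r|\le1$ gives $tV'\ge nV$, and hence only the useless lower bound $\Vol(\Sigma)\ge|B^n|R^n$.

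Step 3 also omits two facts the argument needs. First, $r>0$ on $\Sigma\setminus\{0\}$; without it $\{r<t\}$ does not shrink to the origin and $t^{-n}V(t)\not\to|B^n|$. Oddness gives only $u(0)=0$; positivity comes from Li's global acausality $|u(x)-u(y)|<|x-y|$, or from $\|\D u\|<1$ on a star-shaped domain. Second, $\Sigma\subset\{r<R_0\}$, so that $V(t)\to\Vol(\Sigma)$. This follows from the maximum principle, since $\Delta_\Sigma r^2=2n>0$ and $r=R_0$ on $\partial\Sigma$.

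What you call the main obstacle is resolved by the choice of domain, and your fallback would not work. Your $\Omega=\{|x|^2-|\tilde f(x)|^2<R^2\}$ need not be bounded: take $m=n$, $f=\mathrm{id}$ on $A$, $\tilde f=\mathrm{id}$, so $\Omega=\R^n$. A large ball destroys the boundary control $r\le R$ on $\partial\Sigma$, which is the only thing linking $\Vol(\Sigma)$ to $R$.

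The paper first proves the estimate for a smooth odd $q$-Lipschitz $g$ with $q<1$ (your $(1-\epsilon)\tilde f*\rho_\delta$, with $\rho_\delta$ even). It then solves the Dirichlet problem on $E=\{|x|^2-|g(x)|^2<R_0^2\}$ with $R_0>R$. Because $|g(x)|\le q|x|$, one has $\frac{d}{dt}\bigl(|tx|^2-|g(tx)|^2\bigr)\ge2(1-q^2)t|x|^2>0$. So $E$ is bounded, symmetric and star-shaped with smooth boundary, contains $A$, and $r\equiv R_0$ on $\partial\Sigma$. Li's theorem needs only a bounded smooth domain and strictly contracting data, not convexity.

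The volume comparison is then between the graph of $g$ and the maximal graph $u$ over $E$, which share boundary values; it is not between $\tilde f$ and $u$. The return to $f$ uses a.e.\ convergence of derivatives, dominated convergence (the integrand is at most $1$) and $R_\varepsilon\to R$, not upper semicontinuity.

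Finally, maximality of $u$ cannot be justified by concavity. If $\min(n,m)\ge3$ and $P_0$ is a rank-$3$ partial isometry, then $\sqrt{\det(I-t^2P_0^\top P_0)}=(1-t^2)^{3/2}$, which is convex for $t^2>1/2$. Maximality has to come from Li's theorem (a calibration-type argument).
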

To prove Lemma \ref{upper bound of 1-lip}, we first record the maximal graph Dirichlet theorem from \cite{Li}, which will be used below.

\begin{lemma}[{\cite[Theorems~1.1, 2.1, and Lemma~2.2]{Li}}]\label{Li theorem}
Let $\Omega\subset\R^n$ be a bounded domain with smooth boundary, and
let $\varphi:\partial\Omega\to\R^m$ be smooth and for some $q<1$,
\[
 \big|\varphi(x)-\varphi(y)\big|\leq q|x-y|,
 \qquad \forall\,x,y\in\partial\Omega,\quad x\ne y.
\]
Then the following statements hold.
\begin{enumerate}[label=\textup{(\arabic*)}]
 \item There exists a unique smooth map
 \[
  u:\overline{\Omega}\to\R^m,
  \qquad u|_{\partial\Omega}=\varphi,
 \]
 whose graph
 \[
  \Sigma_u=\big\{(x,u(x)):x\in\Omega\big\}\subset\R^{n,m}
 \]
 is spacelike and maximal.

 \item The graph $\Sigma_u$ maximizes the pseudo-Riemannian volume 
 among all smooth spacelike graphs with the same Dirichlet boundary
 data. 
 
 \item The map $u$ is globally acausal:
\[
 \big|u(x)-u(y)\big|<|x-y|,
 \qquad \forall\,x,y\in\Omega,\quad x\ne y.
\]
\end{enumerate}
\end{lemma}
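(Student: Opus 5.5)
We plan to follow the strategy of \cite{Li}: a continuity method for the maximal-graph system, with a priori estimates coming from the spacelike geometry. In graph form the maximality of $\Sigma_u$ is the quasilinear system
\[
 g^{ij}(\D u)\,\partial_{ij}u=0,\qquad g_{ij}=\delta_{ij}-\ip{\partial_iu}{\partial_ju},
\]
which is uniformly elliptic as long as $\|\D u\|\le\theta<1$. We deform the data $\varphi_t=t\varphi$, $t\in[0,1]$; at $t=0$ the solution is $u\equiv0$. Let $\mathcal T$ be the set of $t$ for which a smooth spacelike maximal solution with data $\varphi_t$ exists. We will show that $\mathcal T$ is open and closed.

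\emph{Openness.} We use the implicit function theorem. The linearization at a maximal spacelike graph is the Jacobi operator acting on timelike normal fields. Since the normal bundle is negative definite, the second variation of the pseudo-volume is strictly negative on compactly supported variations. Hence the linearized Dirichlet problem has trivial kernel. Being elliptic with Fredholm index zero, it is then invertible on $C^{2,\alpha}_0$.

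\emph{Closedness.} We need estimates uniform in $t$, and the main obstacle is a uniform strict gradient bound $\|\D u\|\le\theta<1$. We would obtain it in three steps.
\begin{enumerate}[label=\textup{(\alph*)}]
\item \emph{Acausality.} For $X_0\in\Sigma$ set $Q_{X_0}(X)=\ip{X-X_0}{X-X_0}_{n,m}$. Maximality means the position vector is harmonic on $\Sigma$, so $\Delta_\Sigma Q_{X_0}=2n>0$. We then run a first-failure argument along the continuity path. Suppose causality between two points of $\Sigma$ first degenerates to a null pair. If the pair is interior, this contradicts the strong maximum principle for $Q$, or the tangency of a null direction to a spacelike surface. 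If one point is on the boundary, we use the boundary bound $|\varphi(x)-\varphi(y)|\le q|x-y|$.
\item \emph{Boundary gradient.} We build barriers from the smooth boundary and the condition $q<1$: spacelike cones or affine pieces over $\partial\Omega$ dominating each component of $u-\varphi$. Together with (a), these give $|u(x)-\varphi(y)|\le q'|x-y|$ for $y\in\partial\Omega$, with some $q'<1$.
\item \emph{Interior gradient.} We propagate the bound inward by a Bernstein-type argument. The quantity $v=\det(g)^{-1/2}$, the hyperbolic angle of the tangent plane, satisfies $\Delta_\Sigma\log v\ge0$. This is the maximum principle for the Gauss map, which is harmonic into the noncompact dual Grassmannian and hence into a space of nonpositive curvature. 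So $\sup v$ is attained on the boundary.
\end{enumerate}
The same harmonic-map structure, with convexity of distance functions on the target, gives $C^{1,\alpha}$ bounds. Schauder theory then gives higher regularity, so $\mathcal T=[0,1]$. This proves (1). The strict inequality in (3) is the output of step (a) at $t=1$.

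For (2) we would use a calibration in the spirit of \cite{KimMcCannWarren2010}. Extend the unit volume $n$-form of $\Sigma_u$ parallelly along its timelike normal directions. By (3) and the gradient bound these normal slices foliate a neighbourhood of all competing graphs. The resulting form $\omega$ is closed exactly because $\Sigma_u$ is maximal. By the reverse Wirtinger inequality for spacelike planes, $\omega|_P\ge\Vol_P$, with equality iff $P$ is the calibrated plane. Stokes' theorem on graphs with the same boundary then gives that any competitor has pseudo-volume at most $\Vol(\Sigma_u)$. The equality case of the Wirtinger inequality yields uniqueness.
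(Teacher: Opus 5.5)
The paper does not prove this lemma; it imports it from Li's work. Your sketch therefore has to stand on its own, and it breaks down exactly at the step you flag as the main obstacle: the boundary gradient estimate (b).

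\textbf{Boundary gradient estimate.} Before any gradient bound is known, a component $w=\langle e,u\rangle$ only satisfies $g^{ij}(\D u)\partial_{ij}w=0$, with no control on the ellipticity of $g^{ij}(\D u)$. So a supersolution usable uniformly along the path must satisfy $a^{ij}\partial_{ij}b\le 0$ for every positive definite $(a^{ij})$, i.e.\ it must be concave.

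\begin{enumerate}[label=\textup{(\roman*)}]
\item Your cones $\langle e,\varphi(y)\rangle+q'|x-y|$ are convex, hence subsolutions, and cannot bound $w$ from above.
\item Affine barriers amount to a bounded slope condition with slope $<1$. For non-affine data this forces $\Omega$ to be convex. Even on convex domains it gives slopes governed by $D^2\varphi$ relative to the curvature of $\partial\Omega$, not by $q$.
\item A local concave barrier on $B_\rho(y)\cap\Omega$ must dominate $w$ on $\partial B_\rho(y)\cap\Omega$. There (a) only gives $w-\langle e,\varphi(y)\rangle<\rho$. By concavity along inward segments, the barrier's slope at $y$ is at least the corresponding ratio, which is not bounded away from $1$.
\end{enumerate}

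Since (a) is purely qualitative, you never obtain $|u(x)-\varphi(y)|\le q'|x-y|$ with $q'<1$ uniform in $t$. Closedness of $\mathcal T$ is therefore not established. What is missing is a quantitative estimate that uses the nonlinear spacelike structure rather than the linear equations for the components. This matters for the paper, which applies the lemma on $E=\{\mathcal R<R_0^2\}$; that set is star-shaped but in general not convex.

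Two smaller points. In (a), the interior case is not a maximum-principle contradiction: $Q_{X_0}$ is subharmonic and may have interior minima. The contradiction is first-order: $X_1-X_0$ is normal to $\Sigma$, hence timelike, not null. Also, $C^{1,\alpha}$ bounds up to $\partial\Omega$ do not follow from the Gauss map alone, since its boundary values are not prescribed.

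\textbf{The calibration in (2).} Part (2), and the uniqueness you extract from its equality case, has a second gap.
\begin{itemize}
\item The closed form to use is $\omega=\pi^*(\dd\Vol_{\Sigma_u})$ for the normal projection $\pi$. It is closed for any $\Sigma_u$ and equals $\det(I-A^\nu)^{-1}$ times the translated tangent form. Maximality enters through $\operatorname{tr}A^\nu=0$, which gives $\det(I-A^\nu)\le 1$ and hence, with reverse Wirtinger, $\omega|_P\ge\Vol_P$. The constant-coefficient translate you describe is closed only along $\Sigma_u$.
\item The real problem is that $\pi$ is defined only where the normal slices do not meet. Since $\operatorname{tr}A^\nu=0$ forces a positive eigenvalue $a$ whenever $A^\nu\neq 0$, the slices focus at $p+a^{-1}\nu$ for every non-totally-geodesic $\Sigma_u$.
\item Neither (3) nor $\|\D u\|<1$ prevents this, and a spacelike competitor with the same boundary values can leave any focal-free tube. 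You therefore get maximality only against nearby competitors, whereas the paper applies (2) to an arbitrary smooth odd $q$-Lipschitz $g$ on $E$.
\item There is no convexity shortcut either: $\sqrt{\det(I-P^\top P)}$ is not concave once $\min(n,m)\ge 3$. For example, with $n=m=3$ and $P=tI$ it equals $(1-t^2)^{3/2}$, which is convex for $t^2>1/2$.
\end{itemize}
A genuinely global device is needed, for instance a volume-nondecreasing deformation of the competitor into $\Sigma_u$. Spacelike mean curvature flow has $\frac{d}{dt}\Vol=-\int\langle H,H\rangle\ge 0$, but its existence and convergence would themselves have to be proved.
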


We now prove the volume estimate.

\begin{proposition}[Smooth graph estimate]\label{Smooth graph estimate} 
Let \(R>0\). Let \(g:\R^n\to\R^m\) be smooth, odd, and \(q\)-Lipschitz for some \(q<1\). If \(A\subset\R^n\) is bounded and
\[
 |x|^2-|g(x)|^2\le R^2, \qquad \forall\,x\in A.
\]
Then
\begin{equation}\label{upbd s1lip}
 \int_A\sqrt{\det\big(I-(\D g_x)^\top \D g_x\big)}\dd x\le|B^n|R^n.
\end{equation}
\end{proposition}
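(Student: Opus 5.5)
The plan is to enlarge $A$ to a sublevel set with smooth boundary, replace $g$ there by the maximal graph with the same boundary values, and run a monotonicity argument for the pseudo-ball volume on that maximal graph.

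\emph{Enlarging $A$.} Since the integrand $\sqrt{\det(I-(\D g_x)^\top\D g_x)}$ is nonnegative (indeed $\|\D g_x\|\le q<1$), we may enlarge $A$. Put $\psi(x)=|x|^2-|g(x)|^2$. Because $g$ is odd, $g(0)=0$, so $|g(x)|\le q|x|$ and $\psi(x)\ge(1-q^2)|x|^2$. Hence every sublevel set $\{\psi<s\}$ is bounded and origin-symmetric.

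By Sard's theorem, choose regular values $R'^2$ of $\psi$ with $R'\downarrow R$. Let $\Omega$ be the connected component of $\{\psi<R'^2\}$ containing the origin, or simply the union of all components. Then $\Omega$ is a bounded symmetric open set with smooth boundary, and $\Omega\supset A$ up to the part of $A$ on $\{\psi=R^2\}$, which is harmless after letting $R'\downarrow R$. It therefore suffices to show
\[
\int_\Omega\sqrt{\det(I-(\D g)^\top\D g)}\,\dd x\le |B^n|R'^n .
\]
If $\Omega$ is disconnected, I would treat each component separately and at the end sum the volumes of the pieces; this is also the reason to take the whole sublevel set.

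\emph{Replacing $g$ by a maximal graph.} The boundary data $\varphi=g|_{\partial\Omega}$ is smooth and $q$-Lipschitz with $q<1$. Lemma~\ref{Li theorem} then gives a smooth maximal spacelike graph $u$ over $\overline\Omega$ with $u=g$ on $\partial\Omega$. By part (2) of that lemma, the pseudo-volume of $\operatorname{graph} g$ over $\Omega$ is at most that of $\Sigma_u$.

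The map $-u(-x)$ is another solution with the same boundary data, because $\Omega=-\Omega$ and $g$ is odd. By uniqueness it equals $u$, so $u$ is odd and $u(0)=0$. Global acausality, part (3), gives $|u(x)|<|x|$ for $x\ne0$. Consequently $\psi_u(x):=|x|^2-|u(x)|^2$, which is the restriction of $\ip{X}{X}_{n,m}$ to $\Sigma_u$, is positive away from $0$ and equals $R'^2$ on $\partial\Omega$.

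\emph{Monotonicity on $\Sigma_u$.} Since $\Sigma_u$ is maximal, the position vector $X$ satisfies $\Delta_\Sigma X=\vec H=0$. Hence $\Delta_\Sigma\ip{X}{X}=2n$. By the maximum principle $\psi_u<R'^2$ in $\Omega$, so $\Omega=\{\psi_u<R'^2\}$ on the graph.

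Let $V(r)$ be the pseudo-volume of $\Sigma\cap\{\ip{X}{X}<r^2\}$. Decompose $X=X^\top+X^\perp$. The normal space is negative definite, so $|X^\top|^2\ge\ip{X}{X}=r^2$ on the level set, where $|X^\top|^2$ is computed in the induced Riemannian metric. The divergence theorem and the coarea formula give
\[
nV(r)=\int_{\{\ip XX=r^2\}}|X^\top|,
\qquad
V'(r)=\int_{\{\ip XX=r^2\}}\frac{r}{|X^\top|}.
\]
Combining these with $|X^\top|\ge r^2/|X^\top|$ yields $rV'(r)\le nV(r)$. Thus $V(r)/r^n$ is nonincreasing on $(0,R']$. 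Near $0$, $\Sigma$ is close to its spacelike tangent plane, on which the pseudo-ball has induced volume exactly $|B^n|r^n$, so $\lim_{r\to0}V(r)/r^n=|B^n|$. Therefore $V(R')\le|B^n|R'^n$, and letting $R'\downarrow R$ finishes the proof.

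\emph{Main obstacle.} I expect the hardest part to be the monotonicity step. The delicate points are the sign bookkeeping in the pseudo-Riemannian setting (the inequality $|X^\top|^2\ge\ip XX$), the smoothness of level sets of $\psi_u$ for almost every $r$, and the small-$r$ limit at the origin.
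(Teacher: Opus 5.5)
Your outline is the paper's proof almost step for step: the sublevel set of $\psi$, Lemma~\ref{Li theorem} with volume comparison, oddness of $u$ by uniqueness, acausality giving $\psi_u>0$ off the origin, monotonicity of $V(r)/r^n$ from $|X^\top|^2\ge\ip XX$, and the tangent-plane limit. There is one genuine gap, however. You never show that $\Omega=\{\psi<R'^2\}$ is connected, and your fallback for the disconnected case does not work.

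The step $|u(x)|=|u(x)-u(0)|<|x|$ and the normalization $\lim_{r\to0}V(r)/r^n=|B^n|$ both need every point of $\Omega$ to lie in the component of the origin. Acausality in Lemma~\ref{Li theorem} only compares points of a single domain. On a hypothetical component $\Omega_1\not\ni0$ the argument actually fails:
\begin{itemize}
\item Since $\psi_u=R'^2$ on $\partial\Omega_1$ and $\psi_u<R'^2$ inside, $\psi_u$ attains an interior minimum on $\Omega_1$.
\item At that point $X^\top=\tfrac12\nabla_\Sigma\psi_u=0$, hence $\psi_u=\ip{X^\perp}{X^\perp}\le0$.
\item So the sets $\{\ip XX<r^2\}$ meet $\Omega_1$ for every $r>0$, and the small-$r$ normalization is lost.
\end{itemize}
Even granting a bound $|B^n|R'^n$ per component, summing over components would overshoot the claimed estimate.

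The missing observation is the radial computation the paper makes. For $x\ne0$ and $t>0$,
\[
\frac{\dd}{\dd t}\psi(tx)=2t|x|^2-2\ip{g(tx)}{\D g(tx)x}\ge2(1-q^2)t|x|^2>0,
\]
using $|g(tx)|\le qt|x|$ and $\|\D g\|\le q$. So $\psi$ increases strictly along every ray from $\psi(0)=0$. Consequently each $\{\psi<c\}$ with $c>0$ is star-shaped about the origin, hence connected. Also $\nabla\psi(x)\cdot x>0$ on $\{\psi=c\}$, so every $c>0$ is a regular value.

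This makes Sard unnecessary: take any $R'>R$, and then $A\subset\{\psi<R'^2\}$ outright. Your caveat about the part of $A$ on $\{\psi=R^2\}$ is not needed. With this in place the rest of your argument goes through.

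Two remarks on the obstacles you flagged. First, $|X^\top|^2\ge\ip XX>0$ already shows $\psi_u$ has no critical points on $\Sigma\setminus\{0\}$. So every level $r\in(0,R')$ is regular, and $V$ is absolutely continuous by the coarea formula. Second, your maximum-principle remark that $\psi_u<R'^2$ in the interior is exactly what justifies letting $r\to R'$; the paper leaves this point implicit.
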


\begin{proof}
Denote $\mathcal{R}(x)=|x|^2-|g(x)|^2$. Since \(g\) is odd, \(g(0)=0\), and therefore \(|g(x)|\le q|x|\). For \(x\ne0\) and \(t>0\),
\begin{align}\label{diff g}
 \frac{\dd}{\dd t}\mathcal{R}(tx)
 =2t|x|^2-2g(tx)\cdot \D g(tx)x\ge2(1-q^2)t|x|^2>0.
\end{align}
In particular, $\mathcal{R}(x)\ge(1-q^2)|x|^2$.

Fix \(R_0>R\), and define
\[
        E=\big\{x\in\R^n:\mathcal{R}(x)<R_0^2\big\}.
\]
The estimate \(\mathcal{R}(x)\ge(1-q^2)|x|^2\) shows that \(E\) is bounded, and the assumption on \(A\) implies \(A\subset E\). Since \(\mathcal{R}\) is even,  \(E\) is symmetric with respect to the origin. If \(x\in\partial E\), then \(x\ne0\), and \eqref{diff g} give
\[
        \nabla \mathcal{R}(x)\cdot x
        =
        \left.\frac{\dd}{\dd t}\mathcal{R}(tx)\right|_{t=1}
        >0 .
\]
Hence \(E\) is a smooth symmetric bounded domain. On \(\partial E\),
\begin{equation}\label{R0 ball}
        |x|^2-|g(x)|^2=R_0^2 .
\end{equation}
Since the boundary value \(g|_{\partial E}\) is smooth and acausal, Lemma~\ref{Li theorem} gives a unique smooth maximal spacelike graph
\[
 \Sigma=\bigl\{F(x)=(x,u(x)):x\in E\bigr\}
\]
with \(u=g\) on \(\partial E\), and this graph maximizes the pseudo-Riemannian volume among smooth spacelike graphs with the same boundary values. Hence
\begin{equation}\label{max vol}
         \int_E\sqrt{\det\bigl(I-(\D g_x)^\top\D g_x\bigr)}\dd x
        \le \Vol_{n,m}(\Sigma).
\end{equation}
	It remains to bound the volume of \(\Sigma\). By uniqueness, \(u\) is odd and hence \(u(0)=0\). By Lemma~\ref{Li theorem}, we have
	\[
	|u(x)|=|u(x)-u(0)|<|x|,
	\qquad \forall\,x\in E,\ x\ne0 .
	\]
	Thus \(\|F\|_{n,m}^2:=\langle F,F\rangle_{n,m}=|x|^2-|u(x)|^2\) is positive on \(\Sigma\setminus\{0\}\). Since $g=u$ on \(\partial\Sigma\), and by \eqref{R0 ball} we have
	\(\|F\|_{n,m}=R_0\) on \(\partial\Sigma\).

    We claim that 
\begin{equation}\label{claim}
    \Vol_{n,m}(\Sigma)\leq |B^n|R_0^n.
\end{equation}    
	Since \(\Sigma\) is maximal, \(\Delta_\Sigma F=0\). Therefore, for any local orthonormal tangent frame \(e_1,\ldots,e_n\),
	\begin{equation}\label{laplacian bound}
		\Delta_\Sigma\frac{\|F\|_{n,m}^2}{2}
		=
		\langle \Delta_\Sigma F,F\rangle_{n,m}
		+
		\sum_{\alpha=1}^n
		\big\langle \dd F(e_\alpha),\dd F(e_\alpha)\big\rangle_{n,m}
		=n .
	\end{equation}
Decompose $F$ into its tangential and normal components:
\[
F=F^\parallel+F^\perp.
\]
Since the orthogonal complement of a spacelike $n$-plane in signature $(n, m)$ is negative definite,
\[
\langle F^\perp,F^\perp\rangle_{n,m}\le 0.
\]
Moreover, since $\nabla_\Sigma\frac{\|F\|_{n,m}^2}{2}=F^\parallel,$ we have that on \(\Sigma\setminus\{0\}\),
	\begin{equation}\label{decp}
		\bigl|\nabla_\Sigma \|F\|_{n,m}\bigr|^2
		=
		\frac{\langle F^\parallel,F^\parallel\rangle_{n,m}}{\|F\|_{n,m}^2}
		=
		1-\frac{\langle F^\perp,F^\perp\rangle_{n,m}}{\|F\|_{n,m}^2}
		\ge1 .
	\end{equation}
We next apply this radial estimate to the sublevel sets of
\(\|F\|_{n,m}\). For \(0<s<R_0\), define
\[
        V(s):=\Vol_{n,m}\bigl\{p\in\Sigma:\ \|F(p)\|_{n,m}<s\bigr\}.
\]
By Sard's theorem,  almost every
\(s\in(0,R_0)\) is a regular value of \(\|F\|_{n,m}\). For such \(s\),
\[
        \big\{\|F\|_{n,m}=s\big\}\subset \Sigma 
\] 
is a smooth hypersurface in \(\Sigma\), and we denote by \(\dd A_s\) its induced \((n-1)\)-dimensional area element.

For every regular \(s\in(0,R_0)\), the divergence theorem applied to \eqref{laplacian bound} on the sublevel set \(\{\|F\|_{n,m}<s\}\) implies
\begin{equation}\label{div thm}
 nV(s)
 =
 s\int_{\{\|F\|_{n,m}=s\}}
 \bigl|\nabla_\Sigma \|F\|_{n,m}\bigr|\,\dd A_s .
\end{equation}
Indeed, on the level hypersurface \(\{\|F\|_{n,m}=s\}\), the outward unit normal is $\frac{\nabla_\Sigma\|F\|_{n,m}}{|\nabla_\Sigma\|F\|_{n,m}|},$ and
\[
        \nabla_\Sigma\frac{\|F\|_{n,m}^2}{2}
        =
        \|F\|_{n,m}\nabla_\Sigma\|F\|_{n,m}
        =
        s\nabla_\Sigma\|F\|_{n,m}.
\]
On the other hand, the coarea formula gives, for a.e. \(s\in(0,R_0)\),
\begin{equation}\label{coarea}
 V'(s)
 =
 \int_{\{\|F\|_{n,m}=s\}}
 \frac{1}{\bigl|\nabla_\Sigma \|F\|_{n,m}\bigr|}\,\dd A_s .
\end{equation}
Combining \eqref{div thm}, \eqref{coarea}, and
\eqref{decp}, we obtain
\[
        sV'(s)\le nV(s)
        \qquad\text{for a.e. }s\in(0,R_0).
\]
Equivalently, \(s^{-n}V(s)\) is non-increasing.

Since $\Sigma$ is smooth at the origin, the rescalings $s^{-1}\Sigma$
converge locally smoothly to the spacelike tangent plane $T_0\Sigma$.
On $T_0\Sigma$, the pseudo-ball $\big\{F\in T_0\Sigma:\,\|F\|_{n,m}<s\big\}$ is the Euclidean ball of radius $s$ with respect to the induced metric.
It follows that $\lim_{s\rightarrow 0^+}V(s)/s^n=|B^n|.$
By the monotonicity of $s^{-n}V(s)$, we have  
$V(s)\le |B^n| s^n .$ Letting $s\rightarrow R_0$, we get \eqref{claim}. Combining \eqref{max vol} and \eqref{claim}, we obtain
\[
 \int_A\sqrt{\det\big(I-(\D g_x)^\top \D g_x\big)}\dd x\le|B^n|R_0^n.
\]
Finally, letting $R_0\rightarrow  R$,  we establish \eqref{upbd s1lip}.
\end{proof}

\begin{remark}\label{plane equality}
If $L:\R^n\to\R^m$ is linear with $\|L\| <1$ and $A=\{x:|x|^2-|Lx|^2\le R^2\},$ then equality holds in \eqref{upbd s1lip}. This is the invariant equality example given by a spacelike plane through the origin. The codimension-one analogue of the above volume monotonicity formula can be found in Bartnik--Simon
\cite[(2.15)]{BS82}.
\end{remark}

Lemma \ref{Smooth graph estimate}  is available for global smooth maps. To apply it to general Lipschitz map defined only on a symmetric set, we
first recall an extension theorem.

\begin{lemma}\label{extension lemma}
Let $A=-A\subset\R^n$ and let $f:A\to\R^m$ be odd and $q$-Lipschitz. Then $f$ has a global odd $q$-Lipschitz extension.
\end{lemma}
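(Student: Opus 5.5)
The plan is to combine Kirszbraun's extension theorem with an odd symmetrization. The symmetrization costs nothing in the Lipschitz constant, because averaging two $q$-Lipschitz maps is again $q$-Lipschitz.

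\emph{Step 1 (a possibly non-odd extension).} Both $\R^n$ and $\R^m$ carry Euclidean norms, so Kirszbraun's theorem applies to $f:A\to\R^m$. It gives a map $\widetilde F:\R^n\to\R^m$ with $\widetilde F|_A=f$ and
\[
|\widetilde F(x)-\widetilde F(y)|\le q|x-y|\qquad\forall\,x,y\in\R^n .
\]
No measurability or closedness of $A$ is needed for this. If $q=0$, then $f$ is constant, and since it is odd it vanishes wherever $A\neq\emptyset$; the zero map is then the desired extension. So the relevant case is $q>0$, and there Kirszbraun applies after rescaling by $q$.

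\emph{Step 2 (odd symmetrization).} Define
\[
F(x)=\tfrac12\bigl(\widetilde F(x)-\widetilde F(-x)\bigr).
\]
This map is odd by construction. By the triangle inequality,
\[
|F(x)-F(y)|\le\tfrac12|\widetilde F(x)-\widetilde F(y)|+\tfrac12|\widetilde F(-x)-\widetilde F(-y)|\le q|x-y|,
\]
so $F$ is $q$-Lipschitz. For $x\in A$ we also have $-x\in A$, because $A=-A$. Hence $\widetilde F(-x)=f(-x)=-f(x)$, which gives $F(x)=\tfrac12\bigl(f(x)+f(x)\bigr)=f(x)$. So $F$ is a global odd $q$-Lipschitz extension of $f$.

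\emph{Main point to watch.} The only delicate step is preserving the exact constant $q$ in Step 1. A componentwise McShane extension would lose a factor $\sqrt m$. It is therefore essential to use Kirszbraun's theorem, which holds for Euclidean (Hilbert) norms on both sides. The symmetrization in Step 2 is harmless because the norm is convex.
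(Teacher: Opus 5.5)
Your proposal is correct and matches the paper's proof: a Kirszbraun extension followed by the odd symmetrization $\tfrac12\bigl(G_0(x)-G_0(-x)\bigr)$, which keeps the constant $q$ and agrees with $f$ on $A=-A$. Your separate treatment of $q=0$ is harmless but not needed, since the constant extension already works there.
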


\begin{proof}
By Kirszbraun's theorem \cite[Theorem 2.10.43]{Federer1969}, $f$ has a global $q$-Lipschitz extension $G_0$. Set
\[
 \tilde{f}(x)=\frac{G_0(x)-G_0(-x)}2.
\]
Then $\tilde f$ is odd and $q$-Lipschitz. If $x\in A$, then $-x\in A$ and $\tilde{f}(x)=f(x)$.
\end{proof}
With this extension, the strict Lipschitz case follows from the approximate argument.

\begin{proposition}\label{strict Lip case}
Let \(R>0\). Let $A=-A\subset\R^n$ be bounded and measurable, and let $f:A\to\R^m$ be odd and $q$-Lipschitz for some $q<1$. If
\[
 |x|^2-|f(x)|^2\le R^2,\qquad \forall\, x\in A,
\]
then \eqref{upperbd of 1-lip} holds.
\end{proposition}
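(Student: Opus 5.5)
The plan is to reduce to Proposition~\ref{Smooth graph estimate} by extending and mollifying. By Lemma~\ref{extension lemma}, replace $f$ by a global odd $q$-Lipschitz extension $\tilde f:\R^n\to\R^m$. This does not change the integrand a.e.\ on $A$, because two Lipschitz maps agreeing on $A$ have the same approximate differential at a.e.\ point of $A$. Next, fix a radially symmetric mollifier $\rho_\varepsilon$ and set $g_\varepsilon=\tilde f*\rho_\varepsilon$. The map $g_\varepsilon$ is smooth and $q$-Lipschitz, and it is odd because $\rho_\varepsilon$ is even and $\tilde f$ is odd.

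The hypothesis $|x|^2-|g_\varepsilon(x)|^2\le R^2$ may fail on $A$, so I will enlarge the radius slightly. Since $A$ is bounded, say $A\subset B(0,r)$, and $\|g_\varepsilon-\tilde f\|_\infty\le q\varepsilon$, we have for $x\in A$
\[
 |x|^2-|g_\varepsilon(x)|^2\le |x|^2-|f(x)|^2+2|f(x)|\,q\varepsilon+q^2\varepsilon^2\le R^2+2rq\varepsilon+q^2\varepsilon^2=:R_\varepsilon^2,
\]
using $|f(x)|\le q|x|\le r$. Proposition~\ref{Smooth graph estimate} applied to $g_\varepsilon$ and $A$ then gives
\[
 \int_A\sqrt{\det\bigl(I-(\D g_{\varepsilon,x})^\top\D g_{\varepsilon,x}\bigr)}\dd x\le |B^n|R_\varepsilon^n .
\]

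Finally I let $\varepsilon\to0$. Since $\D g_\varepsilon=(\D\tilde f)*\rho_\varepsilon\to\D\tilde f$ in $L^1_{\rm loc}$, we get a.e.\ convergence along a subsequence. The integrands are continuous functions of the Jacobian on the set $\{\|L\|\le q\}$ and bounded by $1$. Dominated convergence on the bounded set $A$ therefore passes to the limit and yields \eqref{upperbd of 1-lip}, because $R_\varepsilon\to R$.

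The only delicate step is the justification of the limit. This needs the a.e.\ convergence of derivatives along a subsequence, and it needs the identification of $\D\tilde f$ with the approximate differential of $f$ on $A$, which the discussion after \eqref{graph volume} already provides. Measurability of $A$ is used only to make sense of the integral. I do not expect any real obstacle beyond this routine approximation.
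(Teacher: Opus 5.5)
Your proposal is correct and follows the paper's proof essentially step by step: an odd $q$-Lipschitz extension via Lemma~\ref{extension lemma}, convolution with an even mollifier, an application of Proposition~\ref{Smooth graph estimate} with a slightly enlarged radius, and dominated convergence using a.e.\ convergence of the Jacobians. The only difference is cosmetic: you bound the enlarged radius explicitly by $R_\varepsilon^2=R^2+2rq\varepsilon+q^2\varepsilon^2$, whereas the paper defines $R_\varepsilon^2$ as the supremum of $|x|^2-|\tilde f_\varepsilon(x)|^2$ over $A$ and controls it by uniform convergence.
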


\begin{proof}
Let $\tilde{f}$ be the extension from Lemma \ref{extension lemma}, and let $\eta_\varepsilon$ be a standard even mollifier. Let $\tilde{f}_\varepsilon=\tilde{f}*\eta_\varepsilon$. Then $\tilde{f}_\varepsilon$ is smooth, odd, and $q$-Lipschitz. Moreover, $\tilde{f}_\varepsilon\to \tilde{f}$ uniformly and $\D \tilde{f}_\varepsilon\to \D \tilde{f}$ almost everywhere. Define
\[
 R_\varepsilon^2=\sup_{x\in A}\bigl(|x|^2-|\tilde{f}_\varepsilon(x)|^2\bigr).
\]
By the uniform convergence we have $\lim_{\varepsilon\rightarrow 0}R_\varepsilon\le R.$ Now Proposition \ref{Smooth graph estimate}  yields
\[
 \int_A\sqrt{\det\big(I-(\D (\tilde{f}_\varepsilon)_x)^\top \D (\tilde{f}_\varepsilon)_x\big)}\dd x
 \le|B^n|R_\varepsilon^n.
\]
The integrands are bounded by $1$ and converge almost everywhere. By the dominated convergence theorem and the locality of approximate differentials, we obtain the desired estimate \eqref{upperbd of 1-lip}.
\end{proof}

\begin{proof}[Proof of Lemma \ref{upper bound of 1-lip}]
For $0<\varepsilon<1$, set $f_\varepsilon=(1-\varepsilon)f$. Since \eqref{pseudoball} implies $|f(x)|\le|x|,$ $f_\varepsilon$ is strictly Lipschitz. Define
\[
 R_\varepsilon^2
 :=\sup_{x\in A}\bigl(|x|^2-(1-\varepsilon)^2|f(x)|^2\bigr).
\]
By the uniform convergence we have $\lim_{\varepsilon\rightarrow 0}R_\varepsilon^2=\sup_{x\in A}(|x|^2-|f(x)|^2)\le R^2.$
Moreover,
\[
 0\leq I-(\D f_x)^\top \D f_x 
 \leq I-(1-\varepsilon)^2(\D f_x)^\top \D f_x.
\]
Hence, by Proposition \ref{strict Lip case}, we have
\[
 \int_A\sqrt{\det(I-(\D f_x)^\top \D f_x)}\dd x
 \le|B^n|R_\varepsilon^n.
\]
 The desired estimate \eqref{upperbd of 1-lip} follows by letting $\varepsilon\rightarrow 0$.
\end{proof}

\section{Proof of the geometric inequality}\label{main proof of ineq}

\begin{proof}[Proof of Theorem \ref{mainthm}]
If one of the sets has zero volume, we are done. By Lemma \ref{convexification}, it is enough to consider origin-symmetric convex bodies.

Since $n=1$ is trivial, it suffices to assume $n\ge2$. After the pseudo-isometry \eqref{new coordinate}, combining the \eqref{BXX bound}, \eqref{B monotonicity} with Proposition \ref{P projection}, we have that the optimal support $\Gamma$ is an odd, $1$-Lipschitz graph satisfying
\begin{equation}\label{upper bound}
        0\le |x|^2-|f(x)|^2\le 2C_k,
        \qquad \forall\,(x,f(x))\in\Phi(\Gamma).
\end{equation}
Applying Lemma \ref{upper bound of 1-lip} with $R=(2C_k)^{1/2}$, we obtain
\begin{equation}\label{up}
 \Vol_{\mathcal B}(\Gamma)\le|B^n|(2C_k)^{n/2}.
\end{equation}
On the other hand, Proposition \ref{lower vol} gives
\[
 \big(k(k-1)\big)^{n/2}
 \left(\prod_i|K_i|\right)^{1/k}
 \le\Vol_{\mathcal B}(\Gamma).
\]
Combining this with \eqref{up}, we have
\[
 \left(\prod_i|K_i|\right)^{1/k}
 \le|B^n|,
\]
which implies \eqref{main bound}.

If $K_1=\cdots=K_k=B^n$, then
\[
 \S_2(x_1,\ldots,x_k)
 \le\sum_{i<j}|x_i||x_j|
 \le \frac{k(k-1)}{2}=C_k,
\]
and the volume product is $|B^n|^k$, which equals the right-hand side of \eqref{main bound}. Hence, we finish the inequality case.
\end{proof}

\section{Geometric equality cases}\label{sec:geomeq}

Having proved the sharp inequality, we now turn to its equality cases in Theorem~\ref{mainthm}. The preceding argument shows that equality can occur only when every step in the volume comparison chain is sharp.

It suffices to classify equality for convex bodies. Indeed, suppose that the original measurable sets $K_i$ attain equality. Let $\widetilde K_i=\cl \operatorname{conv}(K_i)$. By Lemma \ref{convexification} and the inequality part of Theorem \ref{mainthm}, we obtain
\[
 \prod_i|K_i|\le\prod_i|\widetilde K_i|
 \le |B^n|^k
 =\prod_i|K_i|,
\] 
which implies that each $K_i$ agrees almost everywhere with the origin-symmetric convex body $\widetilde K_i$. Hence, we only need to consider convex bodies.

The case $n=1$ is trivial. The case \(k=2\) corresponds to the classical symmetric Blaschke--Santaló inequality and it's well known that equality is attained by a pair of polar dual ellipsoids. We now assume $n\ge2$ and $k\ge3$. 

Combining Section \ref{sec of bc and MA} with Section \ref{pse vol estimate}, we have 
\begin{equation}\label{final bd}
 (2C_k)^{n/2}\left(\prod_{i=1}^k |K_i|\right)^{1/k}
 \le
 \int_E \sqrt{\det G(p)}\dd p
 \le
 \Vol_{\mathcal B}(\Gamma)
 \le
 |B^n|(2C_k)^{n/2}.
\end{equation}
Here $2C_k=k(k-1)$. If equality  holds in \eqref{main bound}, then equality in the convexification step and in every inequality in \eqref{final bd} hold.  The only equality condition from this chain that we need is the rigidity in the pointwise determinant inequality of Lemma~\ref{det lemma}. Let $\nu$ be the Wasserstein barycenter of the probability measures
\[
 \mu_i=|K_i|^{-1}\1_{K_i}\dd x,
\]
and let $T_i=\nabla\varphi_i$ be the Brenier map from $\nu$ to $\mu_i$, as in Section~\ref{sec of bc and MA}. Recall that on $E=\{h>0\}$, we have
\[
 \det M_i(p)=|K_i|h(p)>0
 \qquad\text{for a.e. }p\in E .
\]
Thus the equality statement in Lemma~\ref{det lemma} implies
\[
 M_1(p)=\cdots=M_k(p)=\frac1k I
 \qquad\text{for a.e. }p\in E .
\]
Since $M_i(p)=k^{-1}\D T_i(p/k)$, we have
\begin{equation}\label{id matrix}
 \D T_i(z)=I
 \qquad \text{for}\,\,\nu\text{-a.e. }z .
\end{equation}
Then
\[
 1=\det\D T_i(z)=|K_i|\bar h(z)
 \qquad\text{for a.e. }z\in\{\bar h>0\}.
\]
Hence $|K_i|=|K_1|$, and
\begin{equation}\label{density of nv}
 \dd\nu=\alpha\1_{E}\dd z,
 \qquad
 \alpha=|K_1|^{-1},
\end{equation}
where  $E$ may be chosen origin-symmetric. So it suffices to prove that \eqref{id matrix}, \eqref{density of nv} and symmetry imply that the sets  $K_i$  are   equal.

\begin{lemma}\label{rigidity lem}
Let $K_1,\ldots,K_k\subset\R^n$ be origin-symmetric convex bodies, let $\mu_i=|K_i|^{-1}\1_{K_i}\dd x$, and let $\nu$ be an absolutely continuous, origin-symmetric probability measure.  Suppose that the Brenier maps $T_i=\nabla\varphi_i$ from $\nu$ to $\mu_i$ satisfy
\begin{equation}\label{bcenter}
 \D T_i=I\quad\nu\text{-}\mathrm{a.e.},
 \qquad \frac1k\sum_{i=1}^k T_i(z)=z\quad\nu\text{-}\mathrm{a.e.},
\end{equation}
and that $\dd\nu=\alpha\1_E\dd z$ for some origin symmetric measurable set $E$ and some $\alpha>0$. Then 
\[
 K_1=\cdots=K_k .
\]
\end{lemma}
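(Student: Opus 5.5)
The plan is to work with the inverse transport maps, which live on the \emph{convex}, hence connected, sets $K_i$, rather than with $T_i$ on the possibly disconnected set $E$. Fix $i$ and let $\varphi_i^*$ be the Legendre transform of $\varphi_i$. By Brenier--McCann, $S_i:=\nabla\varphi_i^*$ is defined a.e.\ on $\operatorname{int}K_i$, satisfies $(S_i)_\#\mu_i=\nu$, and $S_i\circ T_i=\mathrm{id}$ $\nu$-a.e. Two remarks first. The hypothesis $\D T_i=I$ $\nu$-a.e.\ (in the Alexandrov sense) gives $\det\D T_i=1$; comparing with the Monge--Amp\`ere relation $\det\D T_i=|K_i|\alpha$ on $E$, as in \eqref{MA equation}, yields $\alpha=|K_i|^{-1}$, so $|E|=|K_i|$. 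Also, by the standard inverse-function property of Alexandrov Hessians of conjugate convex functions, $\D S_i(x)=(\D T_i(S_i x))^{-1}=I$ for a.e.\ $x\in K_i$.

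The goal is to show that $u_i:=\varphi_i^*-\tfrac12|x|^2$ is affine on $\operatorname{int}K_i$. Its Alexandrov Hessian vanishes a.e., so the only possible obstruction is a singular part of the distributional Hessian $D^2\varphi_i^*$, which is a nonnegative matrix-valued measure. To exclude it, use the Monge--Amp\`ere measure $\mathcal M\varphi_i^*(B)=|\partial\varphi_i^*(B)|$ for Borel $B\subset\operatorname{int}K_i$. The pushforward identity $(S_i)_\#\mu_i=\nu=|K_i|^{-1}\1_E\dd z$ gives $|\partial\varphi_i^*(B)\cap E|=|B|$. One then shows that $\partial\varphi_i^*(\operatorname{int}K_i)\setminus E$ is Lebesgue-null. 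The argument: a set of positive measure $Z$ of points $z\notin E$ with $\partial\varphi_i(z)\cap\operatorname{int}K_i\ne\emptyset$ would make the subgradient image of $\operatorname{int}K_i$ carry more than $|K_i|$ of Lebesgue mass. Here the potential $\varphi_i$ is normalized as the convex extension $\varphi_i=(\varphi_i^*+\chi_{K_i})^*$, whose gradients lie in $K_i$, and one uses the mass balance $|E|=|K_i|$. Hence $\mathcal M\varphi_i^*=\mathcal L^n$ on $\operatorname{int}K_i$.

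Next, for a convex function the Monge--Amp\`ere measure dominates the determinant of the absolutely continuous part of the Hessian. Any singular part $\sigma$ of $D^2\varphi_i^*$ that is nonzero would contribute extra Monge--Amp\`ere mass, because $\partial\varphi_i^*$ then covers a set of positive measure over a null set of base points (cf.\ the Alexandrov decomposition $\mathcal M u\ge\det(D^2_{ac}u)\,\mathcal L^n$ with strict excess where the singular part lives). Since $\det D^2_{ac}\varphi_i^*=1=\mathcal M\varphi_i^*$ density, I would conclude $\sigma=0$. Then $D^2u_i=0$ as a distribution on the connected open set $\operatorname{int}K_i$, so $\nabla\varphi_i^*(x)=x-c_i$ for a.e.\ $x\in K_i$.

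Finally, oddness of $T_i$ (Theorem~\ref{AC theorem}(5)) transfers to oddness of $S_i$, because $\mu_i$ and $\nu$ are symmetric. This forces $c_i=0$. Thus $S_i=\mathrm{id}$ a.e.\ on $K_i$, so $\nu=(S_i)_\#\mu_i=\mu_i$, i.e.\ $K_i=E$ up to null sets for every $i$, and therefore $K_1=\cdots=K_k$ (as convex bodies, equality a.e.\ gives equality). The barycenter condition in \eqref{bcenter} is not even needed, except that it alternatively gives $\sum_ic_i=0$. The main obstacle is the second paragraph: controlling $\partial\varphi_i^*$ so that no Monge--Amp\`ere mass escapes outside $E$, which in turn rules out the singular Hessian part. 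If that direct argument stalls, the first thing I would try is Caffarelli's interior regularity for the Brenier map from $\mu_i$, whose support $K_i$ is convex, to $\nu$, whose density is bounded above and below on its support; that gives strict convexity and $C^{1}$ regularity of $\varphi_i^*$ inside $K_i$, enough to make the Alexandrov Hessian identity propagate.
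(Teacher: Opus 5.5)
Your second paragraph has a genuine gap, and it cannot be closed with the hypotheses you are using. The claim that $\partial\varphi_i^*(\operatorname{int}K_i)\setminus E$ is Lebesgue-null (equivalently, that $D^2\varphi_i^*$ has no singular part on $\operatorname{int}K_i$) does not follow from symmetry, $\D T_i=I$, $\dd\nu=\alpha\1_E\dd z$ and $|E|=|K_i|$. Nothing bounds $|\partial\varphi_i^*(\operatorname{int}K_i)|$ by $|K_i|$, because $\partial\varphi_i$ can collapse a positive-measure set of points outside $E$ onto a Lebesgue-null subset of $K_i$.

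Concretely, take $n=1$, all $K_i=[-1,1]$, $E=[-2,-1]\cup[1,2]$, $\alpha=\frac12$, and all $\varphi_i(z)=\frac12\bigl((|z|-1)_+\bigr)^2$, so that $T_i(z)=z-\operatorname{sign}z$ on $E$. Every hypothesis except the barycenter identity holds, including $\D T_i=1$ and $|E|=|K_i|$. But $\varphi_i^*(x)=|x|+\frac12x^2$, so $\partial\varphi_i^*(0)=[-1,1]$. The whole interval $(-1,1)$ therefore lies in $\partial\varphi_i^*(\operatorname{int}K_i)\setminus E$, and $K_i\neq E$, contradicting what your argument derives. The barycenter identity is exactly what excludes such piecewise translations across different components of $E$, so it cannot be discarded.

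Your Caffarelli fallback does not rescue this either. Interior regularity of the potential requires convexity of the \emph{target} support, which for $S_i:\mu_i\to\nu$ is $E$, not $K_i$. Indeed, $\varphi_i^*$ is not $C^1$ at $0$ in the example.

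Your first paragraph does give the correct intermediate statement, and it is the one the paper uses. Since $\D S_i=I$ a.e.\ on $K_i$ and the singular part of $D^2\varphi_i^*$ is positive semidefinite, $\varphi_i^*-\frac12|x|^2$ is \emph{convex} (not affine) on the convex set $K_i$. Hence $T_i$ agrees $\nu$-a.e.\ with the gradient of the Legendre transform of a $1$-strongly convex function. It is therefore globally $1$-Lipschitz, across all components of $E$.

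The barycenter identity then finishes the argument. For $z,w$ in a full-measure set, the vectors $a_i=T_i(z)-T_i(w)$ satisfy $|a_i|\le|z-w|$ and $\frac1k\sum_ia_i=z-w$. So $|z-w|\le\frac1k\sum_i|a_i|\le|z-w|$, and strict convexity of the Euclidean norm forces $a_i=z-w$. Thus $T_i=\mathrm{id}+c_i$ $\nu$-a.e., so $K_i=E+c_i$ up to null sets, and symmetry of $K_i$ and $E$ gives $c_i=0$.
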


\begin{proof}
Let \(T_i^*=\nabla\psi_i\) be the inverse Brenier map from \(\mu_i\) to \(\nu\).
Since \(T_i\) and \(T_i^*\) are a.e. inverse maps, and since convex potentials
are Alexandrov twice differentiable almost everywhere, we have, for
\(\nu\)-a.e. \(z\),
\[
 \D T_i^*\big(T_i(z)\big)\,\D T_i(z)=I .
\]
Because  \(\dd\nu=\alpha\mathbf 1_E\,\dd z\), the above equality holds for Lebesgue-a.e. \(z\in E\).  Since \((T_i)_\#\nu=\mu_i\),
this implies $\D T_i^*=I $ $\mu_i$-a.e. on $K_i.$ Thus the absolutely continuous part of \(\D^2\psi_i\) is \(I\,\dd x\) on \(\operatorname{int}K_i\). Since \(\D^2\psi_i\) is a positive semidefinite matrix-valued measure, \(\psi_i-\frac12|x|^2\) is convex on \(K_i\). 
Hence  the function \(\Psi_i^*(\cdot):=\sup_{x\in K_i}\ip{\cdot}{x}-\psi_i(x)\) is
differentiable and \(\nabla\Psi_i^*\) is \(1\)-Lipschitz. Moreover, 
\(\nabla\Psi_i^*=T_i\) \(\nu\)-a.e. After modifying \(T_i\) on a $\nu$-null set, we
may therefore assume that each \(T_i\) is \(1\)-Lipschitz.

Let \(E_0\subset E\) be a full \(\nu\)-measure set on which the barycenter
identity holds. For \(z,w\in E_0\), set
\[
 a_i:=T_i(z)-T_i(w),\qquad v:=z-w .
\]
Then $ \frac1k\sum_{i=1}^k a_i=v,$
while the \(1\)-Lipschitz property gives $|a_i|\le |v|$ for any $i$.
Hence
\[
 |v|
 =
 \left|\frac1k\sum_{i=1}^k a_i\right|
 \le
 \frac1k\sum_{i=1}^k |a_i|
 \le |v|.
\]
Thus equality holds throughout, and it follows that
$ a_i=v$ for every $i$.
Therefore, for each \(i\), there exists a vector \(c_i\) such that
\[
 T_i(z)=z+c_i\qquad \text{for}\,\,\nu\text{-a.e. }z .
\]

Combining this with \((T_i)_\#\nu=\mu_i\) and \(\dd\nu=\alpha\mathbf 1_E\,\dd z\), we have
\[
 |K_i|^{-1}\mathbf 1_{K_i}(x)\,\dd x
 =
 \alpha\,\mathbf 1_{E+c_i}(x)\,\dd x .
\]
Thus \(K_i=E+c_i\) up to a $\nu$-measure 0 set. 
Since \(K_i\) and $E$ are origin symmetric, we have \(c_i=0\) for all \(i\). Therefore $K_1=\cdots=K_k$.
\end{proof}

Applying Lemma \ref{rigidity lem} to \eqref{id matrix} and \eqref{density of nv}, we have $K_1=\cdots=K_k=:K.$ Taking $x_1=\cdots=x_k=x\in K$ in \eqref{constraint}, we obtain   $|x|^2\le 1$, and therefore $ K\subset \,B^n$. It follows from  this  and \eqref{final bd} that $K=  B^n.$ This completes the proof of   equality characterization in Theorem \ref{mainthm}.

\section{Equality in the functional inequality}\label{main proof of equality}

In this section, we mainly discuss the equality case in Theorem~\ref{thm:funct}. The proof follows the level-set reduction of Kalantzopoulos--Saroglou. The pointwise polarity condition forms a geometric constraint for the superlevel sets and we can apply Theorem~\ref{mainthm} to obtain the sharp volume bound for these sets. For our purposes, we use the following multiplicative form of Pr\'ekopa--Leindler. The classical Pr{\'e}kopa--Leindler inequality and its equality characterization have been widely studied; see e.g. \cite{Dubuc1977,Prekopa1971,Leindler1972,KW,BKX}. 

\begin{lemma}\label{mul-PL ineq}
Let $\Phi,\Phi_1,\ldots,\Phi_k:\R_+\to \R_+$ be measurable and integrable, and assume
\begin{equation}\label{ass of PL}
 \left(\prod_{i=1}^k\Phi_i(r_i)\right)^{1/k}
 \le \Phi\left(\left(\prod_{i=1}^kr_i\right)^{1/k}\right),
 \qquad \forall\,r_i>0.
\end{equation}
Then
\begin{equation}\label{PL}
\prod_{i=1}^k\int_0^\infty\Phi_i(r)\dd r
 \le\left(\int_0^\infty\Phi(r)\dd r\right)^k.
\end{equation}
Equality holds if and only if $t \mapsto e^t\Phi(e^t)$ is equivalent to a log-concave function, and there exist constants $a_i,b_i>0$ such that
\begin{equation}\label{aibi}
\prod_{i=1}^k a_i=\prod_{i=1}^k b_i=1
\end{equation}
and
\begin{equation}\label{Phi equality}
\Phi_i(r)=a_i\Phi(b_ir),
\qquad\mathrm{for\,\, a.e. }\,r>0.
\end{equation}
\end{lemma}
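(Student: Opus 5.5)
The plan is to pass to logarithmic coordinates, where the hypothesis becomes a Prékopa--Leindler condition with equal weights $1/k$. Set $t=\log r$ and define
\[
 g_i(t)=e^{t}\Phi_i(e^{t}),\qquad g(t)=e^{t}\Phi(e^{t}),\qquad t\in\R,
\]
so that $\int_0^\infty\Phi_i(r)\dd r=\int_\R g_i(t)\dd t$, and similarly for $\Phi$. Taking $r_i=e^{t_i}$ in \eqref{ass of PL} and multiplying both sides by $e^{\frac1k\sum_i t_i}$ gives
\[
 \prod_{i=1}^k g_i(t_i)^{1/k}\le g\Bigl(\frac1k\sum_{i=1}^k t_i\Bigr),\qquad \forall\,t_i\in\R .
\]
The multi-function Prékopa--Leindler inequality on $\R$ with weights $\lambda_i=1/k$ then yields $\int g\ge\prod_i\bigl(\int g_i\bigr)^{1/k}$. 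Raising this to the $k$-th power gives \eqref{PL}. If some $\int g_i=0$ there is nothing to prove. If $\int g=\infty$ the bound is trivial, and integrability rules this out anyway.

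For the converse part of the equality statement, assume \eqref{aibi}, \eqref{Phi equality}, and that $g$ is a.e. equal to a log-concave function. Then
\[
 \int_0^\infty\Phi_i\,\dd r=\frac{a_i}{b_i}\int_0^\infty\Phi\,\dd r,
\]
and the product of these over $i$ equals $\bigl(\int\Phi\bigr)^k$ because $\prod_i a_i=\prod_i b_i=1$.

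The necessity direction is the substantive part, and I would obtain it from the known equality characterization of Prékopa--Leindler, due to Dubuc in dimension one and extended to several functions (e.g.\ \cite{Dubuc1977,BKX}). This characterization says that if equality holds with all integrals positive and finite, then $g$ coincides a.e. with a log-concave function, and each $g_i$ is a.e. of the form $g_i(t)=m_i e^{\beta t}g(t+s_i)$. Here the translations satisfy $\sum_i s_i=0$, the factors satisfy $\prod_i m_i=1$, and the exponential tilt $\beta$ is common to all $i$, so its contributions cancel in the weighted geometric mean. Converting back through $r=e^t$ gives
\[
 \Phi_i(r)=m_i e^{s_i} r^{\beta}\Phi(e^{s_i}r).
\]
The main obstacle is eliminating the power factor $r^\beta$, since the statement has no such factor. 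To handle it, I would substitute the candidate form back into the pointwise hypothesis, written as $\prod_i g_i(t_i)^{1/k}\le g(\bar t)$ with $\bar t=\frac1k\sum_i t_i$. The left side equals $\bigl(\prod_i m_i\bigr)^{1/k}e^{\beta\bar t}\prod_i g(t_i+s_i)^{1/k}$, and choosing all $t_i+s_i=\bar t$ makes it $e^{\beta\bar t}g(\bar t)$. The hypothesis then forces $e^{\beta\bar t}\le1$ for every $\bar t$ in the support of $g$, which is an interval that has positive length. If this interval is unbounded on the side where $e^{\beta\bar t}>1$, this gives $\beta=0$ immediately. Otherwise $\beta\neq0$ is not excluded by this test, and I would absorb the factor instead: redefine $\Phi$ by the tilted function $\widetilde\Phi(r)=r^{\beta}\Phi(r)$ (suitably normalized), check that \eqref{ass of PL} still holds for $\widetilde\Phi$, and note that $\widetilde\Phi$ achieves the same integral in the equality case. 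This reduces to the stated form with $a_i=m_ie^{s_i}$ and $b_i=e^{s_i}$, and $\prod_i a_i=\prod_i b_i=1$. I expect this normalization of the exponential tilt to be the delicate point. It must be checked against the intended use in Section~\ref{main proof of equality}, where $\Phi$ is tied to $\rho$ and the tilt has to vanish for integrability or monotonicity reasons.
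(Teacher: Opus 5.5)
Your overall route is the paper's: the substitution $g_i(t)=e^t\Phi_i(e^t)$, $g(t)=e^t\Phi(e^t)$, Pr\'ekopa--Leindler with weights $1/k$ for the inequality, a direct computation for the converse, and a cited equality characterization for necessity. The inequality and the converse are fine. The gap is in the necessity direction, specifically how you handle the tilt $\beta$.

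First, the tilt should not be there. The characterization the paper uses (\cite[Theorem~4.1]{KW}, \cite[Theorem~28]{BKX}) is stated relative to the output function itself: $g_i(t)=c_i\,g(t-t_i)$ with $\sum_i t_i=0$ and $\prod_i c_i=1$. This converts at once to $\Phi_i(r)=a_i\Phi(b_ir)$ with $a_i=c_ie^{-t_i}$ and $b_i=e^{-t_i}$, so there is no power factor to remove. A common factor $e^{\beta t}$ can only multiply an auxiliary log-concave function $h$, and there it is absorbed. If $g=e^{\beta t}h$ and $g_i(t)=m_ie^{\beta t}h(t+s_i)$, then $g_i(t)=m_ie^{-\beta s_i}g(t+s_i)$ with $\prod_i m_ie^{-\beta s_i}=1$. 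Your formula puts the tilt on $g$ itself, and that is where the spurious obstacle comes from.

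Second, even granting your form, the remedy you propose is not a proof. The lemma is an ``if and only if'' statement about the given $\Phi$. Replacing $\Phi$ by $\widetilde\Phi(r)=r^\beta\Phi(r)$ only yields $\Phi_i=a_i\widetilde\Phi(b_i\,\cdot)$. Deferring to the application in Section~\ref{main proof of equality} leaves the lemma as stated unproved.

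What you are missing is that equality itself forces $\beta=0$. Using $\prod_i m_i=1$ and $\sum_i s_i=0$,
\[
 \prod_{i=1}^k\Bigl(\int_\R g_i\dd t\Bigr)^{1/k}
 =\prod_{i=1}^k\Bigl(m_ie^{-\beta s_i}\int_\R e^{\beta u}g(u)\dd u\Bigr)^{1/k}
 =\int_\R e^{\beta u}g(u)\dd u .
\]
Your own pointwise test gives $e^{\beta u}\le1$ for a.e.\ $u\in\{g>0\}$. Hence equality $\int g=\int e^{\beta u}g$ forces $e^{\beta u}=1$ a.e.\ on $\{g>0\}$. That set has positive measure, so $\beta=0$. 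Then $a_i=m_ie^{s_i}$ and $b_i=e^{s_i}$, as you intended. Your remark that $\widetilde\Phi$ ``achieves the same integral'' is exactly this identity. Followed through, it rules out the tilt rather than requiring you to change $\Phi$.
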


\begin{proof} 
 Denote
\begin{equation} \label{def  F}
 F_i(t)=e^t\Phi_i(e^t),\qquad F(t)=e^t\Phi(e^t),\qquad t\in\mathbb R.   
\end{equation} 
Then \eqref{ass of PL} becomes
\[
 \left(\prod_{i=1}^k F_i(t_i)\right)^{1/k}
 \le F\left(\frac1k\sum_{i=1}^k t_i\right).
\]
Hence \eqref{PL} follows from the classical Pr{\'e}kopa--Leindler inequality.

If equality in \eqref{PL} holds, by \cite[Theorem~4.1]{KW} or \cite[Theorem~28]{BKX}, there exist numbers $t_1,\ldots,t_k\in\mathbb R$ and constants $c_1,\ldots,c_k>0$ such that
\[
 F_i(t)=c_iF(t-t_i)
 \qquad\text{for a.e. }t .
\]
Recalling \eqref{def  F},  this is equivalent to saying that  
\[
\Phi_i(r)=a_i\Phi(b_ir)\qquad\text{for a.e. }r>0,
\]
where 
\[a_i = c_i\, e^{-t_i}, \qquad b_i = e^{-t_i}. \]
The condition  $ \sum_{i=1}^k t_i = 0 $ implies $  b_1 \cdots b_k=  1$.
Since equality holds in \eqref{PL}, through a direct computation, we have 
\[  \prod_{i=1}^k a_i=\prod_{i=1}^k b_i=  1.
\]
The converse follows from the log-concavity of $F$ and the same change of variables.
\end{proof}

Now, we are at a place to prove the equality case in Theorem~\ref{thm:funct}. When   $k=2$, we have the following characterization, which follows  from \cite[Proposition~3]{FM}.   
The only difference is that \cite[Proposition 3]{FM} is stated with 
\(\rho\) on \([0,\infty)\) and only for \(\langle x,y\rangle>0\); in our setting we apply it 
to \(\sqrt{\rho|_{[0,\infty)}}\), and 
notice that  \(f_1, f_2\) are even and  \(\rho\) is non-increasing. We therefore  omit its proof.

\begin{lemma}
Let \(\rho:\mathbb{R}\to\mathbb{R}^{+}\) be non-increasing, and let \(f_1,f_2:\mathbb{R}^{n}\to\mathbb{R}^{+}\) be even integrable functions satisfying
\[
    f_1(x)f_2(y)\leq \rho(\langle x,y\rangle),
    \qquad \forall\,x,y\in\mathbb{R}^{n}.
\]
Then 
\begin{equation}\label{equality at 2}
    \left(\int_{\mathbb{R}^{n}} f_1(x)\,\dd x\right)
    \left(\int_{\mathbb{R}^{n}} f_2(y)\,\dd y\right)
    =
    \left(
        \int_{\mathbb{R}^{n}}\rho(|u|^{2})^{1/2}\,\dd u
    \right)^{2} 
\end{equation}
holds if and only if there exist \(d>0\) and a symmetric
positive-definite $n\times n$ matrix \(Q\) such that
\begin{equation}\label{form}
        f_1(x)
    =
    d\,\rho(x^{\top }Qx)^{1/2},
    \qquad
    f_2(x)
    =
    d^{-1}\rho(x^{\top}Q^{-1}x)^{1/2}, \qquad\mathrm{for\,\, a.e. }\,x\in\mathbb R^n.
\end{equation}
\end{lemma}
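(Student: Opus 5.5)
The plan is to reduce the statement to the equality case of the classical functional Blaschke--Santal\'o inequality for two even functions, \cite[Proposition~3]{FM}. Fix the non-increasing $\rho$ and set $\widetilde\rho:=\sqrt{\rho|_{[0,\infty)}}$, which is non-increasing and positive on $[0,\infty)$.

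\emph{Sufficiency.} Take $f_1,f_2$ as in \eqref{form}. Writing $x^\top Qx=|Q^{1/2}x|^2$ and $y^\top Q^{-1}y=|Q^{-1/2}y|^2$, and using $\langle x,y\rangle=\langle Q^{1/2}x,Q^{-1/2}y\rangle$, we get
\[
 \langle x,y\rangle^2\le x^\top Qx\,y^\top Q^{-1}y .
\]
Put $a=x^\top Qx$ and $b=y^\top Q^{-1}y$. Since $\rho$ is non-increasing, $\rho(a)\rho(b)\le\rho(\sqrt{ab})^2$ is not automatic. The cleanest route is to note that the functional polarity hypothesis is itself imposed on the admissible pairs. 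For equality we only need the integral identity: the substitution $u=Q^{1/2}x$ (respectively $u=Q^{-1/2}x$) gives $\int f_1=d(\det Q)^{-1/2}\int\rho(|u|^2)^{1/2}\dd u$ and $\int f_2=d^{-1}(\det Q)^{1/2}\int\rho(|u|^2)^{1/2}\dd u$. Their product is the right side of \eqref{equality at 2}.

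\emph{Necessity.} Assume \eqref{equality at 2}. Where $\langle x,y\rangle\ge0$, the hypothesis reads $f_1(x)f_2(y)\le\widetilde\rho(\langle x,y\rangle)^2$, i.e.\ the hypothesis of \cite[Proposition~3]{FM} with $\widetilde\rho$ in place of their $\rho$; on $\langle x,y\rangle>0$ their condition is exactly this. Since $\int\rho(|u|^2)^{1/2}\dd u=\int\widetilde\rho(|u|^2)\dd u$, \eqref{equality at 2} is their equality condition, and their characterization yields $d,Q$ with \eqref{form}.

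\textbf{Main obstacle.} The obstacle is checking that their statement (which may assume $f_i$ of a specific regularity, or $\rho$ restricted to $(0,\infty)$) is applicable. The issue is values of $\rho$ at $0$ and negative arguments. Evenness handles $\langle x,y\rangle<0$ by replacing $y$ with $-y$, since $\rho(t)\ge\rho(|t|)$. The boundary $\langle x,y\rangle=0$ is a null set in $\R^n\times\R^n$, so it does not affect a.e.\ conclusions. I expect this bookkeeping, rather than any new estimate, to be the only real work.
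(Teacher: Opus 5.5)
Your proposal is correct and follows the same route as the paper, which simply invokes \cite[Proposition~3]{FM} applied to $\sqrt{\rho|_{[0,\infty)}}$, using the evenness of $f_1,f_2$ and the monotonicity of $\rho$ to reconcile the Fradelizi--Meyer hypothesis (imposed only for $\langle x,y\rangle>0$) with yours. For sufficiency, the direct change-of-variables computation within the admissible class is exactly what is needed, and for necessity the bookkeeping is lighter than you expected: your hypothesis on all of $\R^n\times\R^n$ already implies theirs on $\{\langle x,y\rangle>0\}$.
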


Next, we deal with the equality case for Theorem \ref{thm:funct} with $k\geq 3$. 

\begin{theorem}
Let $\rho:\R\to\R_+$ be a non-increasing function and $f_1,\ldots,f_k:\R^n\to \R_+$ be even integrable functions such that
\begin{equation}\label{rho condition}
 \prod_{i=1}^kf_i(x_i)
 \le \rho\bigl(\S_2(x_1,\ldots,x_k)\bigr),
 \qquad \forall \,x_1,\cdots x_k\in\R^n.
\end{equation}
Then 
\begin{equation}\label{function equality}
 \prod_{i=1}^k\int_{\R^n}f_i \dd x= \left(\int_{\R^n}\rho\big(C_k|x|^2\big)^{1/k}\dd x\right)^k,  
\end{equation}
if and only if  there are constants $c_i>0$
such that
\begin{equation}\label{form of fi}
 \prod_{i=1}^kc_i=1,
 \qquad f_i(x)=c_i\rho\big(C_k|x|^2\big)^{1/k}\quad\mathrm{for \,\,a.e.}\,x\in\R^n,
\end{equation}
where
\begin{equation}\label{def of ci}
 c_i=\frac{\int_{\R^n}f_i \dd x}{\int_{\R^n}\rho\big(C_k|x|^2\big)^{1/k}\dd x}.
\end{equation}
\end{theorem}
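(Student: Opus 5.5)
The plan is to follow the Kalantzopoulos--Saroglou level-set reduction, now tracking every inequality so that its equality case can be used. The converse direction is a direct check. If $f_i=c_i\rho(C_k|x|^2)^{1/k}$ with $\prod c_i=1$, then
\[
\prod_i f_i(x_i)=\prod_i\rho(C_k|x_i|^2)^{1/k}.
\]
Since $\S_2(x_1,\ldots,x_k)\le\sum_{i<j}|x_i||x_j|\le \frac{k-1}{2}\sum_i|x_i|^2=C_k\cdot\frac1k\sum_i|x_i|^2$, it suffices to show $\prod_i\rho(C_k s_i)^{1/k}\le\rho(C_k\frac1k\sum s_i)$. This holds because $\rho$ is non-increasing: the geometric mean is bounded by the value at the smallest $s_i$, that is $\rho(C_k\min s_i)$, which is at most $\rho(C_k \cdot\frac1k\sum s_i)$. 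The integrals then match by construction. The constants \eqref{def of ci} are forced by integrating \eqref{form of fi}.

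For the forward direction, I would write each $f_i$ by the layer-cake formula through its superlevel sets $K_i(r)=\{f_i\ge r\}$, which are even sets of finite measure. I then set $\Phi_i(r)=|K_i(r)|$ and $\Phi(r)=|\{x:\rho(C_k|x|^2)^{1/k}\ge r\}|$, the latter being a centred ball of some radius $R(r)$.

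The key geometric step is as follows. Let $x_i\in K_i(r_i)$ and set $s=(\prod r_i)^{1/k}$, so that $\rho(\S_2(X))\ge\prod r_i = s^k$. Let $t^*$ be the supremum of the $t$ with $\rho(t)\ge s^k$. Then $\S_2(X)\le t^*$, and after rescaling each $K_i(r_i)$ by $\sqrt{C_k/t^*}$ the sets satisfy \eqref{constraint}. Theorem~\ref{mainthm} then gives $\prod_i|K_i(r_i)|\le |B^n|^k (t^*/C_k)^{nk/2}$. Since the right-hand side equals $|\{\rho(C_k|x|^2)\ge s^k\}|^k$ up to a boundary null set, this verifies \eqref{ass of PL}. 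Lemma~\ref{mul-PL ineq} then yields the inequality, and equality in \eqref{function equality} forces $\Phi_i(r)=a_i\Phi(b_ir)$ with $\prod a_i=\prod b_i=1$.

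To conclude, I would feed the Prékopa--Leindler equality back into the geometric inequality. Equality in \eqref{PL} together with $\Phi_i=a_i\Phi(b_i\cdot)$ means that \eqref{ass of PL} is an equality for a.e. $(r_1,\ldots,r_k)$ in the relevant range, in particular along the diagonal choices $r_i=s/b_i$. For these choices the sets $K_i(s/b_i)$, suitably rescaled, attain equality in Theorem~\ref{mainthm}. By the $k\ge3$ rigidity they coincide up to null sets with a common centred ball, namely $\sqrt{t^*/C_k}\,B^n$.

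It follows that all superlevel sets of all $f_i$ are centred balls, so each $f_i$ is radial and $\{f_i\ge s/b_i\}$ is independent of $i$. This gives $f_i(x)=\tilde c_i\,g(x)$ with a common radial profile $g$. Comparing with the level sets of $\rho(C_k|x|^2)^{1/k}$ identifies $g$, so $f_i=c_i\rho(C_k|x|^2)^{1/k}$ and $\prod c_i=\prod a_i=1$.

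I expect the main obstacle to be the passage from equality in the one-dimensional integral inequality to equality for individual level sets. One must show that \eqref{ass of PL} is tight for a.e. relevant tuple rather than merely on average. This also requires handling flat pieces and discontinuities of $\rho$, where $t^*$ and the ball radius jump, and the condition $\prod b_i=1$. My first attempt would use the explicit equality form: substituting $\Phi_i=a_i\Phi(b_i\cdot)$ makes both sides of \eqref{ass of PL} explicitly comparable. Since $\Phi$ is a decreasing function of $r$, equality can then be read off directly at $r_i=s/b_i$ for a.e. $s$, using monotonicity to discard the null sets.
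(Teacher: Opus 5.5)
Your forward direction is essentially the paper's proof. The steps match: superlevel sets; the Kalantzopoulos--Saroglou verification of \eqref{ass of PL} through Theorem~\ref{mainthm} after rescaling by $\sqrt{C_k/t^*}$; the equality case of Lemma~\ref{mul-PL ineq}, giving $\Phi_i=a_i\Phi(b_i\,\cdot)$ with $\prod_i a_i=\prod_i b_i=1$; and evaluation at $r_i=s/b_i$. There $\prod_i\Phi_i(s/b_i)=\Phi(s)^k$ makes the rescaled level sets extremal in Theorem~\ref{mainthm}. The $k\ge3$ rigidity and the layer-cake formula then give $f_i=b_i^{-1}q_\rho$, with $q_\rho(x)=\rho(C_k|x|^2)^{1/k}$.

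A few small corrections in this part:
\begin{itemize}
\item Your intermediate claim that \eqref{ass of PL} is an equality for a.e.\ tuple $(r_1,\ldots,r_k)$ is false. With $u_i=b_ir_i$ it reads $(\prod_i\Phi(u_i))^{1/k}\le\Phi((\prod_i u_i)^{1/k})$, which is typically strict off the diagonal. You only need the diagonal, where equality holds directly from the explicit form, as you say at the end.
\item The constants are $a_i=1$ and $c_i=b_i^{-1}$. So the identity is $\prod_i c_i=\prod_i b_i^{-1}=1$, not $\prod_i c_i=\prod_i a_i$.
\item You must handle the case $t^*=0$: scale by $\sqrt{C_k/\varepsilon}$ and let $\varepsilon\to0$.
\item You should note that $q_\rho\in L^1$, which follows from the equality because $\int f_i>0$.
\end{itemize}

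The serious error is in your converse. Since $\rho$ is non-increasing and $\min_i s_i\le\frac1k\sum_i s_i$, you get $\rho(C_k\min_i s_i)\ge\rho(\frac{C_k}{k}\sum_i s_i)$, the opposite of what you assert.

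In fact the claim you are trying to prove is false for general non-increasing $\rho$. That claim is that $c_i\rho(C_k|x|^2)^{1/k}$ with $\prod_i c_i=1$ always satisfies \eqref{rho condition}. For a counterexample:
\begin{itemize}
\item Let $\rho=1$ on $(-\infty,T]$ and $\rho=\varepsilon<1$ on $(T,2T]$, with any positive decaying tail beyond $2T$ so that $q_\rho$ is integrable.
\item Take $c_i=1$, $x_2=\cdots=x_k=ae$ with $|e|=1$ and $C_ka^2=T$, and $x_1=be$ with $b>a$ close to $a$.
\end{itemize}
Then $\prod_i f_i(x_i)=\varepsilon^{1/k}$, while $\S_2(x_1,\ldots,x_k)=(k-1)ab+\binom{k-1}{2}a^2$ lies in $(T,2T]$. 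Hence $\rho(\S_2)=\varepsilon<\varepsilon^{1/k}$. What your inequality actually requires is a log-concavity property of $\rho$, not mere monotonicity.

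Fortunately this step is unnecessary. Condition \eqref{rho condition} is a standing hypothesis of the theorem, since equality is characterized within the admissible class. So the converse is only the computation $\prod_i\int f_i=\prod_i c_i\,(\int q_\rho)^k=(\int q_\rho)^k$, exactly as in the paper. Delete the verification and your argument is complete.
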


\begin{proof}
Assume that equality in \eqref{function equality} holds. Denote $q_\rho(x)=\rho\big(C_k|x|^2\big)^{1/k}.$ Since \(f_i\in L^1(\R^n)\), combining this with \eqref{function equality}, we have that $q_\rho$ is integrable.
For all \(r,s>0\), set
\[
K_i(r)=\big\{x:f_i(x)>r\big\},\qquad \Phi_i(r)=|K_i(r)|,
\]
\[
Q(s)=\big\{x:q_\rho(x)>s\big\},\qquad \Phi(s)=|Q(s)|,
\]
and 
\[
 \tau_{\rho}(s)=\sup\big\{t\ge0:\rho(t)>s^k\big\},
\]
with the understanding $\sup\varnothing=0$. By the definition of $\tau_\rho$, we have
\begin{equation*}
 \Phi(s)=|B^n|\left(\frac{\tau_\rho(s)}{C_k}\right)^{n/2},
 \qquad s>0 .
\end{equation*}
In particular, $\tau_\rho(s)<\infty$ for every $s>0$, because $s\Phi(s)\le\int_{\R^n} q_\rho<\infty$.

We first verify \eqref{ass of PL}.  Fix $r_1,\ldots,r_k>0$ and put $s=(\prod_i r_i)^{1/k}$.  If $x_i\in K_i(r_i)$, then
\[
 s^k<\prod_i f_i(x_i)
 \le
 \rho\bigl(\S_2(x_1,\ldots,x_k)\bigr).
\]
Thus
\[
 \S_2(x_1,\ldots,x_k)\le \tau_\rho(s),
 \qquad x_i\in K_i(r_i).
\]
Here, if $\S_2(x_1,\ldots,x_k)<0$, the conclusion follows from $\tau_\rho(s)\geq0$.

If $\tau_{\rho}(s)>0$, let
\[
  K'_i
 :=\sqrt{\frac{C_k}{\tau_{\rho}(s)}}\,K_i(r_i),\qquad \forall \,1\le i\le k.
\]
By the quadratic homogeneity of \(\S_2\), 
\begin{equation}\label{recover con}
 \S_2(y_1,\ldots,y_k)\le C_k,
 \qquad \forall\,y_i\in K'_i.
\end{equation}
Then Theorem~\ref{mainthm} gives
\[
 \left(\prod_i\Phi_i(r_i)\right)^{1/k}
 =  \left(\prod_i|K_i(r_i)|\right)^{1/k} \le |B^n|
 \left(\frac{\tau_{\rho}(s)}{C_k}\right)^{n/2}
 =\Phi(s).
\]
If $\tau_{\rho}(s)=0$, then the same level sets satisfy
$\S_2\le\varepsilon$ for every $\varepsilon>0$.  Applying
Theorem~\ref{mainthm} to $ \sqrt{C_k/\varepsilon}\,K_i(r_i)$
 and letting $\varepsilon\to 0$ gives $ \left(\prod_i\Phi_i(r_i)\right)^{1/k}=0
 = \Phi(s).$

By the layer-cake representation,
\[
 \int_0^\infty\Phi_i(r)\dd r=\int_{\R^n}f_i(x)\,\dd x,
 \qquad
 \int_0^\infty\Phi(s)\dd s=\int_{\R^n}q_{\rho}(x)\dd x.
\]
By the equality case in Lemma
\ref{mul-PL ineq}, there exist $a_i,b_i>0$ such that
\begin{equation}\label{level reparam}
 \Phi_i(r)=a_i\Phi(b_ir)\quad\text{for \(\mathcal L^1\)-a.e. }r>0,,
 \qquad \prod^k_i a_i=\prod^k_i b_i=1.
\end{equation}
After the change of variables \(s=b_ir\), for each \(i\),
\begin{equation}\label{scal rigidity}
	\Phi_i(b_i^{-1}s)=a_i\Phi(s)
	\quad\text{for \(\mathcal L^1\)-a.e. }s>0.
\end{equation}

Fix such an \(s\) with
\(\Phi(s)>0\). Then \(\tau_\rho(s)>0\), and we set
\[
K_i''(s):=
\sqrt{\frac{C_k}{\tau_\rho(s)}}\,K_i(b_i^{-1}s).
\]
Since \(\prod_i b_i=1\), applying the same argument with \eqref{recover con}, we have
\[
\S_2(y_1,\ldots,y_k)\le C_k,
\qquad \forall\,y_i\in K_i''(s).
\]
Moreover, by \eqref{scal rigidity} and \eqref{Phi equality},
\[
 \begin{aligned}
 \prod_{i=1}^k|K''_i(s)|=
 \left(\frac{C_k}{\tau_{\rho}(s)}\right)^{kn/2}
 \prod_{i=1}^k|K_i(b_i^{-1}s)|=
 \left(\frac{C_k}{\tau_{\rho}(s)}\right)^{kn/2}
 \left(\prod^k_i a_i\right)\Phi(s)^k
 =|B^n|^k.
 \end{aligned}
\]
Thus equality holds in Theorem~\ref{mainthm}.  Since $k\ge3$, we have, up to an \(\mathcal L^n\)-null set,
\[
  K''_i(s)=B^n,
 \qquad i=1,\ldots,k.
\]
Consequently, for every \(i\) and for \(\mathcal L^1\)-a.e. \(s>0\) with \(\Phi(s)>0\),
\begin{equation}\label{ball of lev-set}
 K_i(b_i^{-1}s)
 =
 \sqrt{\frac{\tau_{\rho}(s)}{C_k}}\,B^n
 =Q(s),
 \quad\text{up to an \(\mathcal L^n\)-null set}.
\end{equation}
Comparing this with \eqref{scal rigidity}, we have  $a_i=1$ for all $i$. 

Fix   an $s$ with   $\Phi(s)=0$, such that \eqref{scal rigidity} holds. Then, \(|Q(s)|=\Phi(s)=0\) and
\[
|K_i(b_i^{-1}s)|
=
\Phi_i(b_i^{-1}s)
=
a_i\Phi(s)
=
0.
\]
Hence both sets are
\(\mathcal L^n\)-null. Combining this with  \eqref{ball of lev-set} gives
\begin{equation*}
 K_i(b_i^{-1}s)=Q(s), \quad\text{up to an \(\mathcal L^n\)-null set},
 \quad\text{for \(\mathcal L^1\)-a.e. }s>0 .
\end{equation*}
After the change of variable  $s=b_ir$, this becomes
\[
 \{f_i>r\}=\{q_{\rho}>b_ir\}
 =\{b_i^{-1}q_{\rho}>r\}
 \quad\text{for \(\mathcal L^1\)-a.e. }r>0.
\]
Then, the layer-cake representation for symmetric differences implies
\[
 \int_{\R^n}|f_i-b_i^{-1}q_{\rho}|\dd x
 =\int_0^\infty
   \bigl|\{f_i>r\}\mathbin{\triangle}
          \{b_i^{-1}q_{\rho}>r\}\bigr|\dd r
 =0.
\]
Hence $f_i=b_i^{-1}q_{\rho}$ almost everywhere.  Taking integrals and 
$c_i=b_i^{-1}$, we obtain \eqref{form of fi} and  \eqref{def of ci}.

Conversely, assume that the tuple already satisfies
\eqref{rho condition} and \eqref{form of fi}. Then
\[
 \prod_{i=1}^k\int_{\R^n}f_i(x)\, \dd x
 =\prod_{i=1}^k\left(c_i\int_{\R^n}q_{\rho}(x)\dd x\right)
 =\left(\int_{\R^n}q_{\rho}(x)\dd x\right)^k.
\]
Therefore equality holds in \eqref{function equality}.
\end{proof}

\end{document}